





\documentclass[pdflatex,sn-mathphys]{sn-jnl}




\theoremstyle{thmstyleone}%
\newtheorem{thm}{Theorem}[section]

\theoremstyle{thmstyletwo}%
\newtheorem{re}{Remark}[section]%

\theoremstyle{thmstylethree}%
\newtheorem{defi}{Definition}[section]%
\newtheorem{ass}{Assumption}[section]
\newtheorem{lem}{Lemma}[section]
\raggedbottom
\numberwithin{equation}{section}

\usepackage{subfigure}

\begin{document}
	
	\title[FORMDA for stochastic nonconvex-concave minimax problems]{An accelerated first-order regularized momentum descent ascent algorithm for stochastic nonconvex-concave minimax problems}
	%
	%
	%
	
	
	\author[1]{\sur{Huiling Zhang}}
	\author*[1,2]{\sur{Zi Xu}\email{xuzi@shu.edu.cn}}
	
	
	\affil[1]{\orgdiv{Department of Mathematics, College of Sciences}, \orgname{Shanghai University}, \orgaddress{\city{Shanghai}, \postcode{200444}, \country{P.R.China}}}
	\affil[2]{\orgdiv{Newtouch Center for Mathematics of Shanghai University}, \orgname{Shanghai University}, \orgaddress{\city{Shanghai}, \postcode{200444}, \country{P.R.China}}}
	
	
	
	\abstract{Stochastic nonconvex minimax problems have attracted wide attention in machine learning, signal processing and many other fields in recent years. In this paper, we propose an accelerated first-order regularized  momentum descent ascent algorithm (FORMDA) for solving stochastic nonconvex-concave minimax problems. The iteration complexity of the algorithm is proved to be $\tilde{\mathcal{O}}(\varepsilon ^{-6.5})$ to obtain an $\varepsilon$-stationary point, which achieves the best-known complexity bound for single-loop algorithms to solve the stochastic nonconvex-concave minimax problems under the stationarity of the objective function.}
	

	\keywords{stochastic nonconvex minimax problem, accelerated momentum projection gradient algorithm, iteration complexity, machine learning}
	
	
	
	\maketitle
	
	\section{Introduction}\label{sec1}
	Consider the following stochastic nonconvex-concave minimax optimization problem:
	\begin{equation}\label{prob-s}
		\min \limits_{x\in \mathcal{X}}\max \limits_{y\in \mathcal{Y}}g(x,y)=\mathbb{E}_{\zeta\sim D}[G(x,y,\zeta)],
	\end{equation}
	where $\mathcal{X}\subseteq \mathbb{R}^{d_x}$ and $\mathcal{Y}\subseteq \mathbb{R}^{d_y}$ are nonempty convex and compact sets, $G(x,y,\zeta): \mathcal{X} \times \mathcal{Y} \rightarrow \mathbb{R}$ is a smooth function, possibly nonconvex in $x$ and concave in $y$, $\zeta$ is a random variable following an unknown distribution $D$, and $\mathbb{E}$ denotes the expectation function. We focus on the single-loop first-order algorithms to solve problem \eqref{prob-s}. This problem has attracted increasing attention in machine learning, signal processing, and many other research fields in recent years, e.g., distributed nonconvex optimization \cite{Giannakis,Liao,Mateos}, wireless system \cite{Chen20}, statistical learning \cite{Abadeh,Giordano}, etc.
	
	There are few existing first-order methods for solving  stochastic nonconvex-concave minimax optimization problem. Rafique et al. \cite{Rafique} proposed a proximally guided stochastic mirror descent method (PG-SMD), which updates $x$ and $y$ simultaneously, and the iteration complexity has been proved to be $\tilde{\mathcal{O}}(\varepsilon ^{-6})$ to an approximate $\varepsilon$-stationary point of  $\Phi (\cdot) = \max_{y\in \mathcal{Y}} f(\cdot, y)$. Zhang et al. \cite{Zhang2022} proposed a SAPD+ which achieves the oracle complexities of $\mathcal{O}\left(\varepsilon^{-6} \right)$ for solving deterministic nonconvex-concave minimax problems. Both algorithms are multi-loop algorithms. On the other hand, there are few existing single-loop algorithms for solving stochastic nonconvex-concave minimax problems.
	Lin et al. \cite{Lin2019} proposed a stochastic gradient descent ascent (SGDA) and  Bot et al. \cite{Bot} proposed a stochastic alternating GDA, both of them require $\mathcal{O}\left(\varepsilon^{-8} \right)$ stochastic gradient evaluations to obtain an approximate $\varepsilon$-stationary point of  $\Phi (\cdot) = \max_{y\in \mathcal{Y}} f(\cdot, y)$. Boroun et al. \cite{Boroun} proposed a novel single-loop
	stochastic primal-dual algorithm with momentum (SPDM) for a special class of nonconvex-concave minimax problems, i.e., the objective function statisfies the Polyak-{\L}ojasiewicz (PL) condition with respect to $x$, and the iteration complexity is proved to be $\mathcal{O}\left(\varepsilon^{-4} \right)$.
	
	There are some exisiting first order methods on solving stochastic nonconvex-strongly concave minimax optimization problems. 
	Lin et al. \cite{Lin2019} proposed a stochastic gradient descent ascent (SGDA) which requires $\mathcal{O}\left( \kappa^{3}\varepsilon^{-4} \right)$ stochastic gradient evaluations to get an $\varepsilon$-stationary point. Luo et al. \cite{luo2020stochastic}  proposed a stochastic recursive gradient descent ascent (SREDA) algorithm, which requires a total number of $\mathcal{O}\left( \kappa^{3}\varepsilon^{-3} \right)$ stochastic gradient evaluations.  
	Huang et al. \cite{Huang} proposed an accelerated first-order momentum descent ascent (Acc-MDA) method with the total number of stochastic gradient evaluations being  $\tilde{\mathcal{O}}\left( \kappa^{4.5}\varepsilon^{-3} \right)$. 
	
	There are also some zeroth-order methods for stochastic nonconvex-strongly concave minimax optimization problems. Liu et al. \cite{Liu} proposed a ZO-Min-Max algorithm and it needs $\mathcal{O}\left( \kappa^{6}\varepsilon^{-6} \right)$ function evaluations to obtain $\varepsilon$-stationary point.  Wang et al. \cite{Wang} proposed a zeroth-order gradient descent multi-step ascent (ZO-GDMSA) algorithm, and the function evaluation complexity of $\tilde{\mathcal{O}}\left( \kappa^{2}\varepsilon^{-4} \right)$ is proved. Luo et al. \cite{luo2020stochastic}  proposed a stochastic recursive gradient descent ascent (SREDA) algorithm, and its function evaluation complexity is $\mathcal{O}\left( \kappa^{3}\varepsilon^{-3} \right)$. Xu et al. \cite{Xu20} proposed a zeroth-order variance reduced gradient descent ascent (ZO-VRGDA) algorithm with $\mathcal{O}\left( \kappa^{3}\varepsilon^{-3} \right)$ function evaluation complexity. Huang et al. \cite{Huang} proposed an accelerated zeroth-order momentum descent ascent (Acc-ZOMDA) method which owns the function evaluation complexity of $\tilde{\mathcal{O}}\left( \kappa^{4.5}\varepsilon^{-3} \right)$. 
	
	There are few exisiting methods on solving stochastic nonconvex-nonconcave minimax optimization problems. 
	Yang et al. \cite{Yang22} proposed a smoothed GDA algorithm for nonconvex-PL minimax problems with the iteration complexity of $\mathcal{O}(\varepsilon^{-4})$. Xu et al. \cite{xu22zeroth} proposed a zeroth-order variance reduced alternating gradient descent ascent (ZO-VRAGDA) algorithm for solving nonconvex-PL minimax problems, which can obtain the total complexity of $\mathcal{O}(\varepsilon^{-3})$. Huang \cite{Huang23} proposed two enhanced momentum-based gradient descent ascent methods for nonconvex-PL minimax problems, which owns the iteration complexity of $\tilde{\mathcal{O}}(\varepsilon^{-3})$.
	
	
	In this paper, we propose an accelerated first-order regularized  momentum descent ascent algorithm (FORMDA) for solving stochastic nonconvex-concave minimax problems. The iteration complexity of the algorithm is proved to be $\tilde{\mathcal{O}}(\varepsilon ^{-6.5})$ to obtain an $\varepsilon$-stationary point.
	
	\subsection{Related Works.}
	We give a brief review on algorithms for solving deterministic minimax optimization problems. There are many existing works on solving convex-concave minimax optimization problems. Since we focus on nonconvex minimax problems, we do not attempt to survey it in this paper and refer to \cite{Chen,Lan2016,Mokhtari,Nemirovski,Nesterov2007,Ou15,Ou21,Tom,Zhang22} for more details.
	
	For deterministic nonconvex-concave minimax problem, there are two types of algorithms, i.e., multi-loop algorithms and single-loop algorithms. One intensively studied type is nested-loop algorithms. Various algorithms of this type have been proposed in \cite{Rafique,nouiehed2019solving,Thek19,Kong,ostrovskii2020efficient}. Lin et al. \cite{lin2020near} proposed a class of accelerated algorithms for smooth nonconvex-concave minimax problems with the complexity bound of $\tilde{\mathcal{O}}\left( \varepsilon ^{-2.5} \right)$ which owns the best iteration complexity till now. On the other hand, fewer studies focus on single-loop algorithms for nonconvex-concave minimax problems. 
	One typical method is the gradient descent-ascent (GDA) method, which performs a gradient descent step on $x$ and a gradient ascent step on $y$ simultaneously at each iteration. Jin et al. \cite{Lin2019} proposed a GDmax algorithm with iteration complexity of $\tilde{\mathcal{O}}\left( \varepsilon ^{-6} \right) $. Lu et al.~\cite{Lu} proposed a hybrid block successive approximation (HiBSA) algorithm, which can obtain an $\varepsilon$-stationary point of $f(x,y)$ in $\tilde{\mathcal{O}}\left( \varepsilon ^{-4} \right)$ iterations. Pan et al. \cite{Pan} proposed a new alternating gradient projection algorithm for nonconvex-linear minimax problem with the iteration complexity of $\mathcal{O}\left( \varepsilon ^{-3} \right)$.
	Xu et al. \cite{Xu} proposed a unified single-loop alternating gradient projection (AGP) algorithm for solving nonconvex-(strongly) concave and (strongly) convex-nonconcave minimax problems, which can find an $\varepsilon$-stationary point with the gradient complexity of $\mathcal{O}\left( \varepsilon ^{-4} \right)$. 
	Zhang et al. \cite{Zhang} proposed a smoothed GDA algorithm which achieves $\mathcal{O}\left( \varepsilon ^{-4} \right)$ iteration complexity for general nonconvex-concave minimax problems. 
	
	Few algorithms have been proposed for solving more general deterministic nonconvex-nonconcave minimax problems. Sanjabi et al. \cite{Sanjabi18} proposed a multi-step gradient descent-ascent algorithm which can find an $\varepsilon$-first order Nash equilibrium in $\mathcal{O}(\varepsilon^{-2}\log\varepsilon^{-1})$ iterations when a one-sided PL condition is satisfied. Yang et al. \cite{Yang} showed the alternating GDA algorithm converges globally at a linear rate for a subclass of nonconvex-nonconcave objective functions satisfying a two-sided PL inequality. Song et al. \cite{SongOptDE} proposed an optimistic dual extrapolation (OptDE) method with the iteration complexity of $\mathcal{O}\left( \varepsilon^{-2} \right)$, if a weak solution exists. Hajizadeh et al. \cite{Hajizadeh} showed that a damped version of extra-gradient method linearly converges to an $\varepsilon$-stationary point of nonconvex-nonconcave minimax problems that the nonnegative interaction dominance condition is satisfied. Xu et al. \cite{xu22zeroth} proposed a zeroth-order alternating gradient descent ascent (ZO-AGDA)
	algorithm for solving NC-PL minimax problem, which can obtain the iteration complexity $\mathcal{O}(\varepsilon^{-2})$. 
	For more related results, we refer to \cite{Bohm,cai2022accelerated,DiakonikolasEG,Doan,Grimmer,Jiang,Lee}.

	\textbf{Notations}
	For vectors, we use $\|\cdot\|$ to represent the Euclidean norm and its induced matrix norm; $\left\langle x,y \right\rangle$ denotes the inner product of two vectors of $x$ and $y$. We use $\nabla_{x} f(x,y)$ (or $\nabla_{y} f(x, y)$) to denote the partial derivative of $f(x,y)$ with respect to $x$ (or $y$) at point $(x, y)$, respectively.
	We use the notation $\mathcal{O} (\cdot)$ to hide only absolute constants which do not depend on any problem parameter, and $\tilde{\mathcal{O}}(\cdot)$ notation to hide only absolute constants and log factors. A continuously differentiable function $f(\cdot)$ is called $L$-smooth if there exists a constant $L> 0$ such
	that for any given $x,y \in \mathcal{X}$,
	\begin{equation*}
		\|\nabla f(x)-\nabla f(y)\|\le L\|x-y\|.
	\end{equation*}
	A continuously differentiable function $f(\cdot)$ is called $\mu$-strongly convcave if there exists a constant $\mu> 0$ such
	that for any $x,y \in \mathcal{X}$,
	\begin{equation*}
		f(y)\le f(x)+\langle \nabla f(x),y-x \rangle-\frac{\mu}{2}\|y-x\|^2.
	\end{equation*}
	
	The paper is organized as follows. In Section \ref{sec2}, an accelerated first-order momentum projection gradient algorithm is proposed for stochastic nonconvex-concave minimax problem, and its iteration complexity is also established. Numerical results are presented in Section \ref{sec4} to show the efficiency of the proposed algorithm. Some conclusions are made in the last section.

	\section{An Accelerated First-order Regularized  Momentum Descent Ascent Algorithm}\label{sec2}
	In this section, based on the framework of the Acc-MDA algorithm~\cite{Huang}, we propose an accelerated first-order regularized  momentum descent ascent algorithm (FORMDA) for solving problem \eqref{prob-s}. 
	At the $k$th iteration of FORMDA, we consider a regularized function of $g(x, y)$, i.e.,
	\begin{equation}\label{sec2:gk}
		g_k(x,y)=g(x,y)-\frac{\rho_k}{2}\|y\|^2,
	\end{equation}
	where $\rho_k\ge0$ is a regularization parameter. Compared to the Acc-MDA algorithm, the main difference in the FORMDA algorithm is that instead of $g(x,y)$, the gradient of $g_k(x,y)$, is computed and used at each iteration. More detailedly, at the $k$th iteration, for some given $I=\{\zeta_1,\cdots,\zeta_b\}$ drawn i.i.d. from an unknown distribution, by denoting
	\begin{align}
		\tilde{G}_k(x,y;\zeta_j)&=G(x,y;\zeta_j)-\frac{\rho_k}{2}\|y\|^2,\label{sec2:Gk}
	\end{align}
	we compute the gradient of the stochastic function $\tilde{G}_k(x,y;I)$ as follows,
	\begin{align}
		\nabla_x\tilde{G}_k(x,y;I)&=\frac{1}{b}\sum_{j=1}^{b}\nabla_x\tilde{G}_k(x,y;\zeta_j),\label{sec2:Gkx}\\
		\nabla_y\tilde{G}_k(x,y;I)&=\frac{1}{b}\sum_{j=1}^{b}\nabla_y\tilde{G}_k(x,y;\zeta_j).\label{sec2:Gky}
	\end{align}
	Then, based on $\nabla_x\tilde{G}_k(x,y;I)$ and $\nabla_y\tilde{G}_k(x,y;I)$, we compute the variance-reduced stochastic gradient $v_k$ and $w_k$ as shown in \eqref{vk} and \eqref{wk} respectively with $0< \gamma_k \leq 1$ and $0<\theta_{k}\leq 1$ that will be defined later. We update $x_k$ and $y_k$ through alternating stochastic gradient projection with the momentum technique shown in \eqref{sfo1:update-tx}-\eqref{sfo1:update-y}, which is similar to that in the Acc-MDA algorithm. The proposed FORMDA algorithm is formally stated in Algorithm \ref{sfoalg:1}.

	\begin{algorithm}[t]
		\caption{An Accelerated First-order Regularized  Momentum Descent Ascent Algorithm (FORMDA)}
		\label{sfoalg:1}
		\begin{algorithmic}
			\State{\textbf{Step 1}:Input $x_1,y_1,\lambda_1,\alpha_1,\beta,0<\eta_1\leq 1, b$; $\gamma_0=1$,$\theta_0=1$. Set $k=1$. }
			\State{\textbf{Step 2}: Draw a mini-batch samples $I_{k}=\{\zeta_i^{k}\}_{i=1}^b$. Compute 
				\begin{align}
					v_{k}&=\nabla_x\tilde{G}_{k}( x_{k},y_{k};I_{k})+(1-\gamma_{k-1})[v_{k-1}-\nabla _x\tilde{G}_{k-1}(x_{k-1},y_{k-1};I_{k})]\label{vk}
				\end{align}			
				and 
				\begin{align}
					w_{k}&=\nabla_y\tilde{G}_{k}( x_{k},y_{k};I_{k})+(1-\theta_{k-1})[w_{k-1}-\nabla_y\tilde{G}_{k-1}( x_{k-1},y_{k-1};I_{k})]\label{wk}
				\end{align}
				where $\nabla_x\tilde{G}_{k}(x,y;I)$, $\nabla_x\tilde{G}_{k-1}(x,y;I)$ and $\nabla_y\tilde{G}_{k}(x,y;I)$, $\nabla_y\tilde{G}_{k-1}(x,y;I)$ are defined as in \eqref{sec2:Gkx} and \eqref{sec2:Gky}.}
			\State{\textbf{Step 3}:Perform the following update for $x_k$ and $y_k$:  	
				\qquad \begin{align}
					\tilde{x}_{k+1}&=\mathcal{P}_{\mathcal{X}}^{1/\alpha_k} \left( x_k - \alpha_kv_k\right),\label{sfo1:update-tx}\\ x_{k+1}&=x_k+\eta_k(\tilde{x}_{k+1}-x_k),\label{sfo1:update-x}\\
					\tilde{y}_{k+1}&=\mathcal{P}_{\mathcal{Y}}^{1/\beta} \left( y_k + \beta w_k\right),\label{sfo1:update-ty}\\ y_{k+1}&=y_k+\eta_k(\tilde{y}_{k+1}-y_k).\label{sfo1:update-y}
			\end{align}}
			\State{\textbf{Step 4}:If some stationary condition is satisfied, stop; otherwise, set $k=k+1, $ go to Step 2.}
		\end{algorithmic}
	\end{algorithm}

	Note that FORMDA algorithm differs from the Acc-MDA algorithm proposed in \cite{Huang} in two ways. On one hand, at each iteration of the FORMDA algorithm, the gradient of $\tilde{G}_{k}(x,y;\zeta)$, a regularized version of $G(x,y;\zeta)$, is computed and used, instead of the gradient of $G(x,y;\zeta)$ that used in the Acc-MDA algorithm. On the other hand, the FORMDA algorithm is designed to solve general nonconvex-concave minimax problems, whereas the Acc-MDA algorithm can only solve nonconvex-strongly concave minimax problems.

	Before we prove the iteration complexity of Algorithm \ref{sfoalg:1}, we first give some mild assumptions.
	\begin{ass}\label{azoass:Lip}
		For any given $\zeta$, $G(x,y,\zeta)$ has Lipschitz continuous gradients and there exist a constant $l>0$ such that for any $x, x_1, x_2\in \mathcal{X}$, and $y, y_1, y_2\in \mathcal{Y}$, we have
		\begin{align*}
			\| \nabla_{x} G(x_1,y,\zeta)-\nabla_{x} G(x_2,y,\zeta)\| &\leq l\|x_{1}-x_{2}\|,\\
			\|\nabla_{x} G(x,y_1,\zeta)-\nabla_{x} G(x,y_2,\zeta)\| &\leq l\|y_{1}-y_{2}\|,\\
			\|\nabla_{y}G(x,y_1,\zeta)-\nabla_{y} G(x,y_2,\zeta)\| &\leq l\|y_{1}-y_{2}\|,\\
			\|\nabla_{y} G(x_1,y,\zeta)-\nabla_{y} G(x_2,y,\zeta)\| &\leq l\|x_{1}-x_{2}\|.
		\end{align*}
	\end{ass}
	\begin{ass}\label{sazocrho}
		$\{\rho_k\}$ is a nonnegative monotonically decreasing sequence.
	\end{ass}
	
	If Assumption \ref{azoass:Lip} holds, $g(x,y)$ has Lipschitz continuous gradients with constant $l$ by Lemma 7 in \cite{xu22zeroth}. Then, by the definition of $g_k(x,y)$ and Assumption \ref{sazocrho}, we know that $g_k(x,y)$ has Lipschitz continuous gradients with constant $L$, where $L=l+\rho_1$.
	
	\begin{ass}\label{afoass:var}
		For any given $\zeta$, there exists a constant $\delta>0$ such that for all $x$ and $y$, it has
		\begin{align*}
			\mathbb{E}[\|\nabla_x  G(x,y,\zeta)-\nabla_xg(x,y) \|^2]&\le\delta^2,\\
			\mathbb{E}[\|\nabla_y  G(x,y,\zeta)-\nabla_yg(x,y) \|^2]&\le\delta^2.
		\end{align*}
	\end{ass}
	By Assumption \ref{afoass:var}, we can immediately get that 
	\begin{align}
		\mathbb{E}[\|\nabla_x  \tilde{G}_k(x,y;I)-\nabla_xg_k(x,y) \|^2]&\le\frac{\delta^2}{b},\label{ass2:Gx}\\
		\mathbb{E}[\|\nabla_y  \tilde{G}_k(x,y;I)-\nabla _yg_k(x,y) \|^2]&\le\frac{\delta^2}{b}.\label{ass2:Gy}
	\end{align}

	We define the stationarity gap as the termination criterion as follows.
	\begin{defi}\label{azogap-f}
		For some given $\alpha_k>0$ and $\beta>0$, the stationarity gap for problem \eqref{prob-s} is defined as 
		\begin{equation*}
			\nabla \mathcal{G}^{\alpha_k,\beta}( x,y) :=\left( \begin{array}{c}
				\frac{1}{\alpha_k}\left( x-\mathcal{P}_{\mathcal{X}}^{\frac{1}{\alpha_k}}\left( x-\alpha_k\nabla _xg( x,y) \right) \right)\\
				\frac{1}{\beta} \left( y-\mathcal{P}_{\mathcal{Y}}^{\frac{1}{\beta} }\left( y+ \beta\nabla _yg( x,y) \right) \right)
			\end{array} \right) .
		\end{equation*}
	\end{defi}

	\subsection{Complexity analysis.}
	In this subsection, we prove the iteration complexity of Algorithm \ref{sfoalg:1}. 
	When strong concavity of $g(x,y)$ with respect to $y$ is absent, $y^*(x)=\arg\max_{y\in\mathcal{Y}}g(x,y)$  is not necessarily unique, and $\Phi(x)=\max_{y\in\mathcal{Y}}g(x,y)$ is not necessarily smooth. To address these challenges, we analyze the iteration complexity of FORMDA algorithm by utilizing  $\Phi_k(x)$  and $y_k^*(x)$, where
	
	\begin{align}
		\Phi_k(x)&:=\max_{y\in\mathcal{Y}}g_k(x,y),\label{phik}\\
		y_k^*(x)&:=\arg\max_{y\in\mathcal{Y}}g_k(x,y).\label{yk*}
	\end{align}
	By Lemma 24 in \cite{nouiehed2019solving}, $\Phi_k(x)$ is $L_{\Phi_k}$-Lipschitz smooth with $L_{\Phi_k}=L+\frac{L^2}{\rho_k}$ under the $\rho_k$-strong concavity of $g_k(x,y)$. Moreover, similar to the proof of Lemma B.2 in \cite{lin2020near}, we have $\nabla_x\Phi_k(x)=\nabla_x g_k(x, y_k^*(x))$.

	We will begin to analyze the iteration complexity of the FORMDA algorithm, mainly through constructing potential functions to obtain the recursion. Firstly, we estimate an upper bound of $\|y_{k+1}^*(\bar{x})-y_{k}^*(x)\|^2$, which will be utilized in the subsequent construction of the potential function.
	
	\begin{lem}\label{sazoclem1}
		Suppose that Assumptions \ref{azoass:Lip} and \ref{sazocrho} hold. Then for any $x, \bar{x}\in\mathcal{X}$,
		\begin{align}\label{sazoclem1:1}
			\|y_{k+1}^*(\bar{x})-y_{k}^*(x)\|^2
			\le\frac{L^2}{\rho_{k+1}^2}\|\bar{x}-x\|^2+\frac{\rho_k-\rho_{k+1}}{\rho_{k+1}}(\|y_{k+1}^*(\bar{x})\|^2-\|y_{k}^*(x)\|^2).
		\end{align}
	\end{lem}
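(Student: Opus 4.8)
The plan is to derive \eqref{sazoclem1:1} from the first-order optimality conditions of the two regularized maximization problems, exploiting the $\rho$-strong concavity that $g_k(x,\cdot)$ and $g_{k+1}(\bar x,\cdot)$ inherit. Writing $\bar y:=y_{k+1}^*(\bar x)$ and $y:=y_k^*(x)$ for brevity, since $g_{k+1}(\bar x,\cdot)$ and $g_k(x,\cdot)$ are concave on the convex set $\mathcal Y$ with respective maximizers $\bar y$ and $y$, the variational inequalities $\langle \nabla_y g_{k+1}(\bar x,\bar y),\,y'-\bar y\rangle\le 0$ and $\langle \nabla_y g_k(x,y),\,y'-y\rangle\le 0$ hold for all $y'\in\mathcal Y$. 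I would take $y'=y$ in the first and $y'=\bar y$ in the second, substitute $\nabla_y g_{k+1}(\bar x,\bar y)=\nabla_y g(\bar x,\bar y)-\rho_{k+1}\bar y$ and $\nabla_y g_k(x,y)=\nabla_y g(x,y)-\rho_k y$, and add, obtaining
\begin{equation*}
\langle \nabla_y g(\bar x,\bar y)-\nabla_y g(x,y),\,y-\bar y\rangle + \langle \rho_k y-\rho_{k+1}\bar y,\,y-\bar y\rangle\le 0.
\end{equation*}

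Next I would control the first inner product by splitting the gradient difference as $[\nabla_y g(\bar x,\bar y)-\nabla_y g(x,\bar y)]+[\nabla_y g(x,\bar y)-\nabla_y g(x,y)]$. The second bracket paired with $y-\bar y$ is nonnegative by concavity (monotonicity of the gradient) of $g(x,\cdot)$ in $y$, while the first bracket is bounded below by $-l\|\bar x-x\|\,\|y-\bar y\|$ through Cauchy--Schwarz and the Lipschitz estimate $\|\nabla_y g(\bar x,\bar y)-\nabla_y g(x,\bar y)\|\le l\|\bar x-x\|$ inherited from Assumption \ref{azoass:Lip}. Writing $\rho_k y-\rho_{k+1}\bar y=\rho_{k+1}(y-\bar y)+(\rho_k-\rho_{k+1})y$, the second inner product equals $\rho_{k+1}\|y-\bar y\|^2+(\rho_k-\rho_{k+1})\langle y,\,y-\bar y\rangle$. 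Combining these yields the key intermediate inequality
\begin{equation*}
\rho_{k+1}\|y-\bar y\|^2+(\rho_k-\rho_{k+1})\langle y,\,y-\bar y\rangle\le l\|\bar x-x\|\,\|y-\bar y\|.
\end{equation*}

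To finish, I would apply Young's inequality to the right-hand side with parameter exactly $\rho_{k+1}$, namely $l\|\bar x-x\|\,\|y-\bar y\|\le \frac{\rho_{k+1}}{2}\|y-\bar y\|^2+\frac{l^2}{2\rho_{k+1}}\|\bar x-x\|^2$, absorbing half of the quadratic term on the left. Dividing by $\rho_{k+1}/2$ and then substituting the polarization identity $-\langle y,\,y-\bar y\rangle=\frac12(\|\bar y\|^2-\|y\|^2-\|\bar y-y\|^2)$ produces the two claimed terms together with a leftover $-\frac{\rho_k-\rho_{k+1}}{\rho_{k+1}}\|\bar y-y\|^2$. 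Since $\{\rho_k\}$ is monotonically decreasing (Assumption \ref{sazocrho}), this leftover is nonpositive and may be discarded, and bounding $l\le L$ gives \eqref{sazoclem1:1}.

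The main obstacle I anticipate is the correct handling of the mixed gradient difference $\nabla_y g(\bar x,\bar y)-\nabla_y g(x,y)$: the two decoupled effects, the shift in $x$ and the shift in $y$, must be separated so that concavity in $y$ supplies a favorable sign while only the $x$-perturbation is paid for via the Lipschitz constant; a naive single Lipschitz bound would pollute the estimate with an $\|y-\bar y\|$ factor one cannot afford. The remaining steps are bookkeeping: choosing the Young parameter to be precisely $\rho_{k+1}$ so the surviving coefficients match the target, and invoking the monotonicity of $\rho_k$ (not merely its nonnegativity) to drop the residual negative square.
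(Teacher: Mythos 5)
Your proof is correct and follows essentially the same route as the paper's: both start from the two variational inequalities at $y_{k+1}^*(\bar x)$ and $y_k^*(x)$ evaluated at each other's maximizer, split the resulting gradient difference into a concavity/monotonicity term with favorable sign, a Lipschitz-in-$x$ term controlled by Cauchy--Schwarz and Young's inequality, and regularization cross terms resolved by the polarization identity and the monotonicity of $\{\rho_k\}$. The only cosmetic differences are bookkeeping: the paper invokes the $\rho_k$-strong concavity of $g_k(x,\cdot)$ directly (rather than plain concavity of $g$ plus explicit algebra on the $\rho$ terms as you do), its cross term carries $y_{k+1}^*(\bar x)$ instead of $y_k^*(x)$, and it applies Young's inequality with parameter $\rho_k$ rather than $\rho_{k+1}$, producing the intermediate constant $\frac{L^2}{\rho_k\rho_{k+1}}\le\frac{L^2}{\rho_{k+1}^2}$ where you bound $\frac{l^2}{\rho_{k+1}^2}\le\frac{L^2}{\rho_{k+1}^2}$.
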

	
	\begin{proof}
		The optimality condition for $y_k^*(x)$ implies that $\forall y\in \mathcal{Y}$ and $\forall k\geq 1$,
		\begin{align}
			\langle\nabla_yg_{k+1}(\bar{x},y_{k+1}^*(\bar{x})), y-y_{k+1}^*(\bar{x})\rangle&\le0,\label{sazoclem1:2}\\
			\langle\nabla_yg_k(x,y_{k}^*(x)), y-y_{k}^*(x)\rangle&\le0.\label{sazoclem1:3}
		\end{align}
		Setting $y=y_{k}^*(x)$ in \eqref{sazoclem1:2} and $y=y_{k+1}^*(\bar{x})$ in \eqref{sazoclem1:3}, adding these two inequalities and using the strong concavity of $g_k(x,y)$ with respect to $y$, we have
		\begin{align}
			&\langle\nabla_yg_{k+1}(\bar{x},y_{k+1}^*(\bar{x}))-\nabla_yg_k(x,y_{k+1}^*(\bar{x})), y_{k}^*(x)-y_{k+1}^*(\bar{x})\rangle\nonumber\\
			\le&\langle\nabla_yg_k(x,y_{k+1}^*(\bar{x}))-\nabla_yg_k(x,y_{k}^*(x)), y_{k+1}^*(\bar{x})-y_{k}^*(x)\rangle\nonumber\\
			\le&-\rho_k\|y_{k+1}^*(\bar{x})-y_{k}^*(x)\|^2.\label{sazoclem1:4}
		\end{align}
		By the definition of $g_k(x,y)$, the Cauchy-Schwarz inequality and Assumption \ref{azoass:Lip}, \eqref{sazoclem1:4} implies that
		\begin{align}
			&(\rho_k-\rho_{k+1})\langle y_{k+1}^*(\bar{x}),y_{k}^*(x)-y_{k+1}^*(\bar{x}) \rangle\nonumber\\
			\le&\langle\nabla_yg(\bar{x},y_{k+1}^*(\bar{x}))-\nabla_yg(x,y_{k+1}^*(\bar{x})), y_{k+1}^*(\bar{x})-y_{k}^*(x)\rangle-\rho_k\|y_{k+1}^*(\bar{x})-y_{k}^*(x)\|^2\nonumber\\
			\le&\frac{L^2}{2\rho_k}\|\bar{x}-x\|^2-\frac{\rho_k}{2}\|y_{k+1}^*(\bar{x})-y_{k}^*(x)\|^2.\label{sazoclem1:5}
		\end{align}
		Since $\langle y_{k+1}^*(\bar{x}),y_{k}^*(x)-y_{k+1}^*(\bar{x}) \rangle=\frac{1}{2}(\|y_{k}^*(x)\|^2-\|y_{k+1}^*(\bar{x})\|^2-\|y_{k+1}^*(\bar{x})-y_{k}^*(x)\|^2)$ and Assumption \ref{sazocrho}, \eqref{sazoclem1:5} implies that
		\begin{align*}
			\|y_{k+1}^*(\bar{x})-y_{k}^*(x)\|^2
			\le\frac{L^2}{\rho_{k+1}^2}\|\bar{x}-x\|^2+\frac{\rho_k-\rho_{k+1}}{\rho_{k+1}}(\|y_{k+1}^*(\bar{x})\|^2-\|y_{k}^*(x)\|^2).
		\end{align*}
		The proof is completed.
	\end{proof}

	Next, we provide an estimate of the difference between $\Phi_{k+1}(x_{k+1})$ and $\Phi_k(x_{k})$.
	
	\begin{lem}\label{safoclem2}
		Suppose that Assumptions \ref{azoass:Lip} and \ref{sazocrho} hold. Let $\{(x_k,y_k)\}$ be a sequence generated by Algorithm \ref{sfoalg:1}, if $\rho_k\le L$, then $\forall k \ge 1$, 
		\begin{align}
			\Phi_{k+1}(x_{k+1})-\Phi_k(x_{k})
			\le&2\eta_k\alpha_k L^2\|y_k-y_k^*(x_k)\|^2+2\eta_k\alpha_k\|\nabla _xg_k(x_{k},y_{k})-v_k\|^2\nonumber\\
			&-(\frac{3\eta_k}{4\alpha_k}-\frac{L^2\eta_k^2}{\rho_{k+1}})\|\tilde{x}_{k+1}-x_k\|^2+\frac{\rho_k-\rho_{k+1}}{2}\sigma_y^2,\label{safoclem2:1}
		\end{align}
		where $\sigma_y=\max\{\|y\| \mid y\in\mathcal{Y}\}$.
	\end{lem}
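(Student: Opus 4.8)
The plan is to split the left-hand side at the \emph{new} iterate $x_{k+1}$ rather than at $x_k$, writing
\[
\Phi_{k+1}(x_{k+1})-\Phi_k(x_k)=\bigl[\Phi_{k+1}(x_{k+1})-\Phi_k(x_{k+1})\bigr]+\bigl[\Phi_k(x_{k+1})-\Phi_k(x_k)\bigr].
\]
This particular grouping is what makes the gradient in the descent estimate carry the index $k$ (hence the optimizer $y_k^*(x_k)$, exactly as in the statement) rather than the index $k+1$; splitting at $x_k$ instead would force $y_{k+1}^*(x_k)$ to appear and would not match the claimed bound.

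First I would dispose of the regularization-gap bracket. Denoting by $\hat y$ the maximizer defining $\Phi_{k+1}(x_{k+1})$, the suboptimality of $\hat y$ for $g_k(x_{k+1},\cdot)$ gives $\Phi_k(x_{k+1})\ge g_k(x_{k+1},\hat y)$, so by the definition $g_k(x,y)=g(x,y)-\tfrac{\rho_k}{2}\|y\|^2$ the bracket is at most $g_{k+1}(x_{k+1},\hat y)-g_k(x_{k+1},\hat y)=\tfrac{\rho_k-\rho_{k+1}}{2}\|\hat y\|^2$. Assumption \ref{sazocrho} ($\rho_k\ge\rho_{k+1}$) together with $\|\hat y\|\le\sigma_y$ then yields the term $\tfrac{\rho_k-\rho_{k+1}}{2}\sigma_y^2$.

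For the descent bracket, since $\Phi_k$ is $L_{\Phi_k}$-smooth with $L_{\Phi_k}=L+L^2/\rho_k$ and $\nabla\Phi_k(x_k)=\nabla_x g_k(x_k,y_k^*(x_k))=\nabla_x g(x_k,y_k^*(x_k))$, and since $x_{k+1}-x_k=\eta_k(\tilde x_{k+1}-x_k)$, the descent lemma gives
\[
\Phi_k(x_{k+1})-\Phi_k(x_k)\le \eta_k\langle\nabla_x g(x_k,y_k^*(x_k)),\tilde x_{k+1}-x_k\rangle+\tfrac{L_{\Phi_k}}{2}\eta_k^2\|\tilde x_{k+1}-x_k\|^2.
\]
In the inner product I would insert $v_k$ and $\nabla_x g(x_k,y_k)$, writing the gradient as $v_k+\bigl[\nabla_x g(x_k,y_k^*(x_k))-\nabla_x g(x_k,y_k)\bigr]+\bigl[\nabla_x g_k(x_k,y_k)-v_k\bigr]$ (using $\nabla_x g=\nabla_x g_k$). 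The $v_k$-part is controlled by the variational inequality for the projection $\tilde x_{k+1}=\mathcal P_{\mathcal X}^{1/\alpha_k}(x_k-\alpha_k v_k)$ tested at $x=x_k$, which gives $\langle v_k,\tilde x_{k+1}-x_k\rangle\le-\tfrac1{\alpha_k}\|\tilde x_{k+1}-x_k\|^2$.

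The two remaining gradient-error brackets I would bound together by a single Young inequality with parameter tuned so that the $\|\tilde x_{k+1}-x_k\|^2$ coefficient is $\tfrac1{4\alpha_k}$; the companion term is split by $\|a+b\|^2\le2\|a\|^2+2\|b\|^2$ and the Lipschitz bound of Assumption \ref{azoass:Lip} (with $l\le L$), producing $2\alpha_k L^2\|y_k-y_k^*(x_k)\|^2+2\alpha_k\|\nabla_x g_k(x_k,y_k)-v_k\|^2$. Multiplying the inner-product estimates by $\eta_k$ and collecting the three contributions to $\|\tilde x_{k+1}-x_k\|^2$ — namely $-\eta_k/\alpha_k$ from the projection, $+\eta_k/(4\alpha_k)$ from Young, and $\tfrac{L_{\Phi_k}}{2}\eta_k^2$ from smoothness — gives the stated coefficient $-\bigl(\tfrac{3\eta_k}{4\alpha_k}-\tfrac{L^2\eta_k^2}{\rho_{k+1}}\bigr)$, where I bound $L_{\Phi_k}=L+L^2/\rho_k\le 2L^2/\rho_{k+1}$. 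This last step is precisely where the hypothesis $\rho_k\le L$ (so that $L\le L^2/\rho_k$) and the monotonicity $\rho_{k+1}\le\rho_k$ (replacing $\rho_k$ by $\rho_{k+1}$ in the denominator) enter. The main delicacy is exactly this bookkeeping: routing the optimizer mismatch into the clean $y_k^*(x_k)$ term while landing $\rho_{k+1}$ rather than $\rho_k$ in the final denominator, which is why the decomposition must be performed at $x_{k+1}$ and why the sign condition $\rho_k\le L$ is imposed.
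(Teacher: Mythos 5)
Your proposal is correct and follows essentially the same route as the paper: the same splitting $\Phi_{k+1}(x_{k+1})-\Phi_k(x_{k+1})$ plus $\Phi_k(x_{k+1})-\Phi_k(x_k)$, the same bound $\frac{\rho_k-\rho_{k+1}}{2}\sigma_y^2$ on the regularization gap via the suboptimality of $y_{k+1}^*(x_{k+1})$ for $g_k$, the same three-term decomposition of $\nabla_x\Phi_k(x_k)$ combined with the projection optimality inequality, and the same use of $\rho_k\le L$ and monotonicity to bound $L_{\Phi_k}/2$ by $L^2/\rho_{k+1}$. The only cosmetic difference is that you apply one Young inequality to the sum of the two gradient-error brackets and then split by $\|a+b\|^2\le 2\|a\|^2+2\|b\|^2$, whereas the paper applies Young separately to each bracket with weight $\tfrac{1}{8\alpha_k}$; both yield identical constants.
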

	
	\begin{proof}
		Since that $\Phi_k(x)$ is $L_{\Phi_k}$-smooth with respect to $x$ and $\rho_k\le L$, we have that
		\begin{align} 
			&\Phi_k(x_{k+1})-\Phi_k(x_{k})\nonumber\\
			\le &\langle \nabla _x\Phi_k(x_{k}) ,x_{k+1}-x_k \rangle  +\frac{L_{\Phi_k}}{2}\|x_{k+1}-x_k\|^2\nonumber\\
			\le&\eta_k\langle \nabla _xg_k(x_{k},y_k^*(x_k))-\nabla_xg_k(x_{k},y_{k}) ,\tilde{x}_{k+1}-x_k \rangle+\eta_k\langle v_k ,\tilde{x}_{k+1}-x_k \rangle  \nonumber\\
			&+\eta_k\langle \nabla_xg_k(x_{k},y_{k})-v_k,\tilde{x}_{k+1}-x_k \rangle+\frac{L^2\eta_k^2}{\rho_k}\| \tilde{x}_{k+1}-x_k\|^2.\label{safoclem2:2}
		\end{align}
		Next, we estimate the first three terms in the right hand side of \eqref{safoclem2:2}. By the Cauchy-Schwarz inequality, we get
		\begin{align}
			&\langle \nabla _xg_k(x_{k},y_k^*(x_k))-\nabla _xg_k(x_{k},y_{k}),\tilde{x}_{k+1}-x_k \rangle\nonumber\\
			\le&2\alpha_k\|\nabla _xg_k(x_{k},y_k^*(x_k))-\nabla _xg_k(x_{k},y_{k})\|^2+\frac{1}{8\alpha_k}\|\tilde{x}_{k+1}-x_k\|^2\nonumber\\
			\le&2\alpha_k L^2\|y_k-y_k^*(x_k)\|^2+\frac{1}{8\alpha_k}\|\tilde{x}_{k+1}-x_k\|^2.\label{safoclem2:3}
		\end{align}
		By the Cauchy-Schwarz inequality, we have
		\begin{align}
			&\langle \nabla _xg_k(x_{k},y_{k})-v_k,\tilde{x}_{k+1}-x_k \rangle\nonumber\\
			\le&2\alpha_k\|\nabla _xg_k(x_{k},y_{k})-v_k\|^2+\frac{1}{8\alpha_k}\|\tilde{x}_{k+1}-x_k\|^2.\label{safoclem2:4}
		\end{align}
		The optimality condition for $x_k$ in \eqref{sfo1:update-tx} implies that $\forall x\in \mathcal{X}$ and $\forall k\geq 1$,
		\begin{equation}\label{safoclem2:5}
			\langle v_k,\tilde{x}_{k+1}-x_k \rangle \le -\frac{1}{\alpha_k}\|\tilde{x}_{k+1}-x_k\|^2.
		\end{equation}
		Plugging \eqref{safoclem2:3}, \eqref{safoclem2:4} and \eqref{safoclem2:5} into \eqref{safoclem2:2} and using $\rho_{k+1}\le\rho_k$, we get
		\begin{align}
			\Phi_k(x_{k+1})-\Phi_k(x_{k})
			\le&2\eta_k\alpha_k L^2\|y_k-y_k^*(x_k)\|^2+2\eta_k\alpha_k\|\nabla _xg_k(x_{k},y_{k})-v_k\|^2\nonumber\\
			&-(\frac{3\eta_k}{4\alpha_k}-\frac{L^2\eta_k^2}{\rho_{k+1}})\|\tilde{x}_{k+1}-x_k\|^2.\label{safoclem2:6}
		\end{align}
		On the other hand, we have
		\begin{align}
			\Phi_{k+1}(x_{k+1})-\Phi_{k}(x_{k+1})
			\le&\Phi_{k+1}(x_{k+1})-g_k(x_{k+1},y_{k+1}^*(x_{k+1}))\nonumber\\
			=&\frac{\rho_k-\rho_{k+1}}{2}\|y_{k+1}^*(x_{k+1})\|^2\le\frac{\rho_k-\rho_{k+1}}{2}\sigma_y^2.\label{safoclem2:7}
		\end{align}
		Combining \eqref{safoclem2:6} and \eqref{safoclem2:7}, we complete the proof.
	\end{proof}

	We further to estimate the upper bound of the first two terms in the right-hand side of \eqref{safoclem2:1} in Lemma \ref{safoclem2}. 
	
	\begin{lem}\label{safoclem3}
		Suppose that Assumptions \ref{azoass:Lip} and \ref{sazocrho} hold. Let $\{(x_k,y_k)\}$ be a sequence generated by Algorithm \ref{sfoalg:1}, if $0<\eta_k\le1$, $0<\beta\le\frac{1}{6L}$ and $\rho_k\le L$, then $\forall k \ge 1$, 
		\begin{align}
			&\|y_{k+1}-y_{k+1}^*(x_{k+1})\|^2\nonumber\\
			\le&(1-\frac{\eta_k\beta\rho_{k+1}}{4})\|y_{k}-y_k^*(x_k)\|^2-\frac{3\eta_k}{4}\|\tilde{y}_{k+1} -y_k\|^2+\frac{5L^2\eta_k}{\rho_{k+1}^3\beta}\|\tilde{x}_{k+1}-x_k\|^2\nonumber\\
			&+\frac{5\eta_k\beta}{\rho_{k+1}}\|\nabla_{y}g_k(x_{k},y_{k})-w_k\|^2+\frac{5(\rho_k-\rho_{k+1})}{\rho_{k+1}^2\eta_k\beta}(\|y_{k+1}^*(x_{k+1})\|^2-\|y_{k}^*(x_k)\|^2).\label{safoclem3:1}
		\end{align}
	\end{lem}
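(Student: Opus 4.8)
The plan is to track the proximity $\|y_{k+1}-y_{k+1}^*(x_{k+1})\|^2$ in two stages: first compare $y_{k+1}$ against the maximizer $y_k^*(x_k)$ at the \emph{frozen} index $k$, and then pay a small penalty to move the reference point from $y_k^*(x_k)$ to $y_{k+1}^*(x_{k+1})$. Throughout I would write $y^*:=y_k^*(x_k)$ and exploit that $g_k(x_k,\cdot)$ is $\rho_k$-strongly concave and $L$-smooth, and that $y^*$ is its constrained maximizer over $\mathcal{Y}$.

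First I would analyze a single ascent step. Since $y_{k+1}=(1-\eta_k)y_k+\eta_k\tilde{y}_{k+1}$ with $\eta_k\in(0,1]$, the convex-combination identity gives
\[
\|y_{k+1}-y^*\|^2=(1-\eta_k)\|y_k-y^*\|^2+\eta_k\|\tilde{y}_{k+1}-y^*\|^2-\eta_k(1-\eta_k)\|\tilde{y}_{k+1}-y_k\|^2,
\]
so the core estimate is a bound on $\|\tilde{y}_{k+1}-y^*\|^2$. Using the optimality inequality for the projection $\tilde{y}_{k+1}=\mathcal{P}_{\mathcal{Y}}^{1/\beta}(y_k+\beta w_k)$, namely $\langle \tilde{y}_{k+1}-y_k,\tilde{y}_{k+1}-y^*\rangle\le\beta\langle w_k,\tilde{y}_{k+1}-y^*\rangle$, together with the polarization identity $\langle \tilde{y}_{k+1}-y_k,\tilde{y}_{k+1}-y^*\rangle=\frac12(\|\tilde{y}_{k+1}-y_k\|^2+\|\tilde{y}_{k+1}-y^*\|^2-\|y_k-y^*\|^2)$, I would split $w_k=\nabla_yg_k(x_k,y_k)+(w_k-\nabla_yg_k(x_k,y_k))$. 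The deterministic piece $\beta\langle\nabla_yg_k(x_k,y_k),\tilde{y}_{k+1}-y^*\rangle$ is routed through function values: the descent lemma bounds $\langle\nabla_yg_k(x_k,y_k),\tilde{y}_{k+1}-y_k\rangle$ by $g_k(x_k,\tilde{y}_{k+1})-g_k(x_k,y_k)+\frac{L}{2}\|\tilde{y}_{k+1}-y_k\|^2$, strong concavity bounds $\langle\nabla_yg_k(x_k,y_k),y_k-y^*\rangle$ by $g_k(x_k,y_k)-g_k(x_k,y^*)-\frac{\rho_k}{2}\|y_k-y^*\|^2$, and the two function-value gaps telescope into $g_k(x_k,\tilde{y}_{k+1})-g_k(x_k,y^*)\le0$ since $y^*$ maximizes $g_k(x_k,\cdot)$. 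The error piece is handled by splitting $\tilde{y}_{k+1}-y^*=(\tilde{y}_{k+1}-y_k)+(y_k-y^*)$ and applying Young's inequality with both parameters equal to $\rho_k/2$; using $\beta\le\frac{1}{6L}$ and $\rho_k\le L$ to keep $\beta L+\frac{\beta\rho_k}{2}\le\frac14$ then yields
\[
\|\tilde{y}_{k+1}-y^*\|^2\le\Big(1-\frac{\beta\rho_k}{2}\Big)\|y_k-y^*\|^2-\frac34\|\tilde{y}_{k+1}-y_k\|^2+\frac{4\beta}{\rho_k}\|\nabla_yg_k(x_k,y_k)-w_k\|^2.
\]
Substituting this into the convex-combination identity and discarding the extra nonnegative term $\eta_k(1-\eta_k)\|\tilde{y}_{k+1}-y_k\|^2$ gives a clean one-step bound for $\|y_{k+1}-y_k^*(x_k)\|^2$ with contraction factor $(1-\frac{\eta_k\beta\rho_k}{2})$, negative term $-\frac{3\eta_k}{4}\|\tilde{y}_{k+1}-y_k\|^2$, and error constant $\frac{4\eta_k\beta}{\rho_k}$.

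Next I would move the reference point via Young's inequality, $\|y_{k+1}-y_{k+1}^*(x_{k+1})\|^2\le(1+c)\|y_{k+1}-y_k^*(x_k)\|^2+(1+\frac1c)\|y_k^*(x_k)-y_{k+1}^*(x_{k+1})\|^2$, with $c=\frac{\eta_k\beta\rho_{k+1}}{4}$, and invoke Lemma \ref{sazoclem1} with $\bar{x}=x_{k+1}$, $x=x_k$, using $\|x_{k+1}-x_k\|^2=\eta_k^2\|\tilde{x}_{k+1}-x_k\|^2$ from \eqref{sfo1:update-x} to bound the drift $\|y_k^*(x_k)-y_{k+1}^*(x_{k+1})\|^2$. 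The bookkeeping then uses $\rho_{k+1}\le\rho_k$ so that $(1+c)(1-\frac{\eta_k\beta\rho_k}{2})\le1-\frac{\eta_k\beta\rho_{k+1}}{4}$, the elementary bound $1+\frac1c\le\frac{5}{\eta_k\beta\rho_{k+1}}$ (valid since $\eta_k\beta\rho_{k+1}\le\frac16<1$), and $c\le\frac14$ so that $(1+c)\frac{4}{\rho_k}\le\frac{5}{\rho_{k+1}}$; these reproduce exactly the coefficients $\frac{5L^2\eta_k}{\rho_{k+1}^3\beta}$, $\frac{5\eta_k\beta}{\rho_{k+1}}$, and $\frac{5(\rho_k-\rho_{k+1})}{\rho_{k+1}^2\eta_k\beta}$ in \eqref{safoclem3:1}, while $(1+c)\ge1$ preserves the $-\frac{3\eta_k}{4}\|\tilde{y}_{k+1}-y_k\|^2$ term.

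I expect the main obstacle to be the single-step estimate on $\|\tilde{y}_{k+1}-y^*\|^2$, and specifically the smoothness cross-term $\langle\nabla_yg_k(x_k,y_k),\tilde{y}_{k+1}-y_k\rangle$. A naive Cauchy--Schwarz bound on it introduces a factor of order $L^2/\rho_k$ that is incompatible with the $O(\beta\rho_k)$ contraction one needs; the function-value detour, which cancels the gap through the maximality of $y^*$ and leaves only $\frac{L}{2}\|\tilde{y}_{k+1}-y_k\|^2$ to be absorbed by the projection term under $\beta L\le\frac16$, is what makes both the $\frac34$ coefficient and the $\rho_k$-scale error constant emerge. The remainder is purely the constant tracking required to match the five coefficients of \eqref{safoclem3:1}.
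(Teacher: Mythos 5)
Your proposal is correct and follows essentially the same route as the paper's own proof: a one-step contraction for $\|y_{k+1}-y_k^*(x_k)\|^2$ assembled from the projection optimality condition, the descent lemma, $\rho_k$-strong concavity, maximality of $y_k^*(x_k)$, and a Young-type bound on the gradient-error term (your prox-step estimate, fed through the convex-combination identity, reproduces exactly the paper's intermediate inequality with contraction $1-\frac{\eta_k\beta\rho_k}{2}$, coefficient $-\frac{3\eta_k}{4}$, and error constant $\frac{4\eta_k\beta}{\rho_k}$), followed by the reference-point shift via Young's inequality and Lemma \ref{sazoclem1} with $\|x_{k+1}-x_k\|=\eta_k\|\tilde{x}_{k+1}-x_k\|$. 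The only cosmetic differences are that the paper works directly with $y_{k+1}=y_k+\eta_k(\tilde{y}_{k+1}-y_k)$ rather than first bounding $\|\tilde{y}_{k+1}-y_k^*(x_k)\|^2$, and it takes the Young parameter $\frac{\eta_k\beta\rho_k}{4}$ where you take $\frac{\eta_k\beta\rho_{k+1}}{4}$; both choices close the bookkeeping to the same five coefficients.
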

	
	\begin{proof}
		$g_k(x,y)$ is $\rho_k$-strongly concave with respect to $y$, which implies that
		\begin{align}\label{safoclem3:2}
			&g_k(x_{k},y)-g_k( x_{k},y_{k})\nonumber\\
			\le& \langle \nabla _{y}g_k(x_{k},y_{k}),y-y_{k} \rangle -\frac{\rho_k}{2}\| y-y_{k} \|^2\nonumber\\
			=& \langle w_k,y-\tilde{y}_{k+1} \rangle +\langle \nabla _{y}g_k(x_{k},y_{k})-w_k,y-\tilde{y}_{k+1} \rangle +\langle \nabla _{y}g_k(x_{k},y_{k}),\tilde{y}_{k+1}-y_{k} \rangle\nonumber\\
			&-\frac{\rho_k}{2}\| y-y_{k} \|^2.
		\end{align}
		By  Assumption \ref{azoass:Lip}, $g_k(x,y)$ has Lipschitz continuous gradient with respect to $y$ and then
		\begin{align}\label{safoclem3:3}
			&g_k(x_{k},\tilde{y}_{k+1})-g_k( x_{k},y_{k})
			\ge\langle \nabla _{y}g_k(x_{k},y_{k}),\tilde{y}_{k+1}-y_{k} \rangle -\frac{L}{2}\| \tilde{y}_{k+1}-y_{k} \|^2.
		\end{align}
		The optimality condition for $y_k$ in \eqref{sfo1:update-ty} implies that $\forall y\in \mathcal{Y}$ and $\forall k\geq 1$,
		\begin{align}\label{safoclem3:4}
			\langle w_k,y-\tilde{y}_{k+1} \rangle &\le \frac{1}{\beta}\langle\tilde{y}_{k+1}-y_k,y-\tilde{y}_{k+1} \rangle\nonumber\\
			&=-\frac{1}{\beta}\|\tilde{y}_{k+1}-y_k\|^2+\frac{1}{\beta}\langle\tilde{y}_{k+1}-y_k,y-y_k \rangle.
		\end{align}
		Plugging \eqref{safoclem3:4} into \eqref{safoclem3:2} and combining \eqref{safoclem3:3}, by setting $y=y_k^*(x_k)$, we have
		\begin{align}\label{safoclem3:5}
			&g_k(x_{k},y_k^*(x_k))-g_k( x_{k},\tilde{y}_{k+1})\nonumber\\
			\le& \frac{1}{\beta}\langle\tilde{y}_{k+1}-y_k,y_k^*(x_k)-y_k \rangle+\langle \nabla _{y}g_k(x_{k},y_{k})-w_k,y_k^*(x_k)-\tilde{y}_{k+1} \rangle \nonumber\\
			&-\frac{\rho_k}{2}\|y_{k}-y_k^*(x_k) \|^2-(\frac{1}{\beta}-\frac{L}{2})\|\tilde{y}_{k+1}-y_k\|^2.
		\end{align}
		Next, we estimate the first two terms in the right hand side of \eqref{safoclem3:5}. By \eqref{sfo1:update-y}, we get
		\begin{align}
			&\|y_{k+1}-y_k^*(x_k)\|^2\nonumber\\
			=&\|y_{k}+\eta_k(\tilde{y}_{k+1}-y_k)-y_k^*(x_k)\|^2\nonumber\\
			=&\|y_k-y^*(x_k)\|^2+2\eta_k\langle\tilde{y}_{k+1}-y_k,y_k-y_k^*(x_k) \rangle+\eta_k^2\|\tilde{y}_{k+1}-y_k\|^2.\label{safoclem3:6}
		\end{align}
		\eqref{safoclem3:6} can be rewritten as 
		\begin{align}
			&\langle\tilde{y}_{k+1}-y_k,y_k^*(x_k)-y_k \rangle\nonumber\\
			=&\frac{1}{2\eta_k}\|y_k-y_k^*(x_k)\|^2+\frac{\eta_k}{2}\|\tilde{y}_{k+1}-y_k\|^2-\frac{1}{2\eta_k}	\|y_{k+1}-y_k^*(x_k)\|^2.\label{safoclem3:7}
		\end{align}
		By the Cauchy-Schwarz inequality, we have
		\begin{align}
			&\langle \nabla _{y}g_k(x_{k},y_{k})-w_k,y_k^*(x_k)-\tilde{y}_{k+1} \rangle \nonumber\\
			\le&\frac{2}{\rho_k}\|\nabla _{y}g_k(x_{k},y_{k})-w_k\|^2+\frac{\rho_k}{8}\|y^*(x_k)-\tilde{y}_{k+1} \|^2\nonumber\\
			\le&\frac{2}{\rho_k}\|\nabla _{y}g_k(x_{k},y_{k})-w_k\|^2+\frac{\rho_k}{4}\|y_k^*(x_k)-y_k\|^2+\frac{\rho_k}{4}\|\tilde{y}_{k+1} -y_k\|^2.\label{safoclem3:8}
		\end{align}
		Plugging \eqref{safoclem3:7}, \eqref{safoclem3:8} into \eqref{safoclem3:5}, and using the fact that $g_k(x_{k},y_k^*(x_k))\ge g_k( x_{k},\tilde{y}_{k+1})$, we get
		\begin{align}
			&\frac{1}{2\eta_k\beta}\|y_{k+1}-y_k^*(x_k)\|^2\nonumber\\
			\le&(\frac{1}{2\eta_k\beta}-\frac{\rho_k}{4})\|y_{k}-y_k^*(x_k)\|^2+(\frac{\eta_k}{2\beta}+\frac{\rho_k}{4}+\frac{L}{2}-\frac{1}{\beta})\|\tilde{y}_{k+1} -y_k\|^2\nonumber\\
			&+\frac{2}{\rho_k}\|\nabla _{y}g_k(x_{k},y_{k})-w_k\|^2.\label{safoclem3:9}
		\end{align}
		By the assumption $0<\eta_k\le1$, $0<\tilde{\beta}\le\frac{1}{6L}$, $\rho_k\le L$ and \eqref{safoclem3:9}, we obtain that
		\begin{align}
			&\|y_{k+1}-y_k^*(x_k)\|^2\nonumber\\
			\le&(1-\frac{\eta_k\beta\rho_k}{2})\|y_{k}-y_k^*(x_k)\|^2-\frac{3\eta_k}{4}\|\tilde{y}_{k+1} -y_k\|^2+\frac{4\eta_k\beta}{\rho_k}\|\nabla _{y}g_k(x_{k},y_{k})-w_k\|^2.\label{safoclem3:10}
		\end{align}
		By the Cauchy-Schwarz inequality, Lemma \ref{sazoclem1} and \eqref{sfo1:update-x}, we have
		\begin{align}
			&\|y_{k+1}-y_{k+1}^*(x_{k+1})\|^2\nonumber\\
			=&\|y_{k+1}-y_k^*(x_{k})\|^2+2\langle y_{k+1}-y_k^*(x_{k}),y_k^*(x_{k})-y_{k+1}^*(x_{k+1}) \rangle+\|y_k^*(x_{k})-y_{k+1}^*(x_{k+1})\|^2\nonumber\\
			\le&(1+\frac{\eta_k\beta\rho_k}{4})\|y_{k+1}-y_k^*(x_{k})\|^2+(1+\frac{4}{\eta_k\beta\rho_k})\|y_k^*(x_{k})-y_{k+1}^*(x_{k+1})\|^2\nonumber\\
			\le&(1+\frac{\eta_k\beta\rho_k}{4})\|y_{k+1}-y_k^*(x_{k})\|^2+(1+\frac{4}{\eta_k\beta\rho_k})\frac{\eta_k^2L^2}{\rho_{k+1}^2}\|\tilde{x}_{k+1}-x_k\|^2\nonumber\\
			&+(1+\frac{4}{\eta_k\beta\rho_k})\frac{\rho_k-\rho_{k+1}}{\rho_{k+1}}(\|y_{k+1}^*(x_{k+1})\|^2-\|y_{k}^*(x_k)\|^2).\label{safoclem3:11}
		\end{align}
		Plugging \eqref{safoclem3:10} into \eqref{safoclem3:11}, we obtain
		\begin{align}
			&\|y_{k+1}-y_{k+1}^*(x_{k+1})\|^2\nonumber\\
			\le&(1-\frac{\eta_k\beta\rho_k}{2})(1+\frac{\eta_k\beta\rho_k}{4})\|y_{k}-y_k^*(x_k)\|^2-\frac{3\eta_k}{4}(1+\frac{\eta_k\beta\rho_k}{4})\|\tilde{y}_{k+1} -y_k\|^2\nonumber\\
			&+(1+\frac{4}{\eta_k\beta\rho_k})\frac{\eta_k^2L^2}{\rho_{k+1}^2}\|\tilde{x}_{k+1}-x_k\|^2+\frac{4\eta_k\beta}{\rho_k}(1+\frac{\eta_k\beta\rho_k}{4})\|\nabla _{y}g_k(x_{k},y_{k})-w_k\|^2\nonumber\\
			&+(1+\frac{4}{\eta_k\beta\rho_k})\frac{\rho_k-\rho_{k+1}}{\rho_{k+1}}(\|y_{k+1}^*(x_{k+1})\|^2-\|y_{k}^*(x_k)\|^2).\label{safoclem3:12}
		\end{align}
		Since $0<\eta_k\le1$, $0<\beta\le\frac{1}{6L}$ and $\rho_k\le L$, we have $\eta_k\beta\rho_k<1$.Then, by Assumption \ref{sazocrho}, we get $(1-\frac{\eta_k\beta\rho_k}{2})(1+\frac{\eta_k\beta\rho_k}{4})\le1-\frac{\eta_k\beta\rho_{k+1}}{4}$, $-\frac{3\eta_k}{4}(1+\frac{\eta_k\beta\rho_k}{4})\le-\frac{3\eta_k}{4}$, $\frac{4\eta_k\beta}{\rho_k}(1+\frac{\eta_k\beta\rho_k}{4})\leq\frac{5\eta_k\beta}{\rho_{k+1}}$, $(1+\frac{4}{\eta_k\beta\rho_k})\le \frac{5}{\rho_{k+1}\beta\eta_k}$.
		Thus, we obtain
		\begin{align*}
			&\|y_{k+1}-y_{k+1}^*(x_{k+1})\|^2\nonumber\\
			\le&(1-\frac{\eta_k\beta\rho_{k+1}}{4})\|y_{k}-y_k^*(x_k)\|^2-\frac{3\eta_k}{4}\|\tilde{y}_{k+1} -y_k\|^2+\frac{5L^2\eta_k}{\rho_{k+1}^3\beta}\|\tilde{x}_{k+1}-x_k\|^2\nonumber\\
			&+\frac{5\eta_k\beta}{\rho_{k+1}}\|\nabla _{y}g_k(x_{k},y_{k})-w_k\|^2+\frac{5(\rho_k-\rho_{k+1})}{\rho_{k+1}^2\eta_k\beta}(\|y_{k+1}^*(x_{k+1})\|^2-\|y_{k}^*(x_k)\|^2).
		\end{align*}
		The proof is completed.
	\end{proof}

	Next, we provide upper bound estimates of $\mathbb{E}[\|\nabla _{x}g_{k+1}(x_{k+1},y_{k+1})-v_{k+1}\|^2]$ and $\mathbb{E}[\|\nabla_{y}g_{k+1}(x_{k+1},y_{k+1})-w_{k+1}\|^2]$ in the following lemma.
	
	\begin{lem}\label{safoclem4}
		Suppose that Assumptions \ref{azoass:Lip} and \ref{afoass:var} hold. Let $\{\left(x_k,y_k\right)\}$ be a sequence generated by Algorithm \ref{sfoalg:1}, then $\forall k \ge 1$, 
		\begin{align}
			&\mathbb{E}[\|\nabla _{x}g_{k+1}(x_{k+1},y_{k+1})-v_{k+1}\|^2]\nonumber\\
			\le&(1-\gamma_{k})\mathbb{E}[\|\nabla _{x}g_k(x_{k},y_{k})-v_k\|^2]+\frac{2\gamma_k^2\delta^2}{b}+\frac{2L^2\eta_k^2}{b}\mathbb{E}[\|\tilde{x}_{k+1}-x_k\|^2+\|\tilde{y}_{k+1}-y_k\|^2],\label{safoclem4:1x}\\
			&\mathbb{E}[\|\nabla_{y}g_{k+1}(x_{k+1},y_{k+1})-w_{k+1}\|^2]\nonumber\\
			\le&(1-\theta_{k})\mathbb{E}[\|\nabla _{y}g_k(x_{k},y_{k})-w_k\|^2]+\frac{2\theta_k^2\delta^2}{b}+\frac{4L^2\eta_k^2}{b}\mathbb{E}[\|\tilde{x}_{k+1}-x_k\|^2+\|\tilde{y}_{k+1}-y_k\|^2]\nonumber\\
			&+\frac{4(\rho_k^2-\rho_{k+1}^2)\sigma_y^2}{b}.\label{safoclem4:1y}
		\end{align}
	\end{lem}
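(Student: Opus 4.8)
The plan is to derive both inequalities through the standard variance-reduction (STORM-type) error recursion, exploiting that $v_{k+1}$ and $w_{k+1}$ in \eqref{vk}--\eqref{wk} are momentum-corrected minibatch estimators. Let $\mathcal{F}_k$ denote the $\sigma$-algebra generated by all samples drawn through the computation of $(x_{k+1},y_{k+1})$, so that $x_k,y_k,x_{k+1},y_{k+1},v_k,w_k$ are $\mathcal{F}_k$-measurable while the fresh batch $I_{k+1}$ is independent of $\mathcal{F}_k$. First I would write $v_{k+1}$ from \eqref{vk} (with index shifted to $k+1$) and subtract the target $\nabla_x g_{k+1}(x_{k+1},y_{k+1})$. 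Since the regularizer $\tfrac{\rho_k}{2}\|y\|^2$ is independent of $x$, we have $\nabla_x\tilde G_k=\nabla_x G$ and $\nabla_x g_k=\nabla_x g$, so the error telescopes cleanly into $\epsilon^x_{k+1}=(1-\gamma_k)\epsilon^x_k+\gamma_k A^x+(1-\gamma_k)B^x$, where $\epsilon^x_k:=v_k-\nabla_x g_k(x_k,y_k)$, the term $A^x:=\nabla_x G(x_{k+1},y_{k+1};I_{k+1})-\nabla_x g(x_{k+1},y_{k+1})$ is the fresh sampling noise at the new iterate, and $B^x:=[\nabla_x G(x_{k+1},y_{k+1};I_{k+1})-\nabla_x G(x_k,y_k;I_{k+1})]-[\nabla_x g(x_{k+1},y_{k+1})-\nabla_x g(x_k,y_k)]$ is the centered gradient-difference noise.

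The key observation is that $\mathbb{E}[A^x\mid\mathcal{F}_k]=0$ and $\mathbb{E}[B^x\mid\mathcal{F}_k]=0$, while $\epsilon^x_k$ is $\mathcal{F}_k$-measurable. Taking conditional expectation therefore annihilates the cross term with $\epsilon^x_k$, giving $\mathbb{E}\|\epsilon^x_{k+1}\|^2=(1-\gamma_k)^2\mathbb{E}\|\epsilon^x_k\|^2+\mathbb{E}\|\gamma_k A^x+(1-\gamma_k)B^x\|^2$, after which $(1-\gamma_k)^2\le1-\gamma_k$ yields the leading coefficient. I would bound $\mathbb{E}\|\gamma_k A^x+(1-\gamma_k)B^x\|^2\le2\gamma_k^2\mathbb{E}\|A^x\|^2+2(1-\gamma_k)^2\mathbb{E}\|B^x\|^2$ by Young's inequality (sidestepping the $A^x$--$B^x$ correlation that comes from sharing $I_{k+1}$). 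The first piece is controlled by \eqref{ass2:Gx}, i.e. $\mathbb{E}\|A^x\|^2\le\delta^2/b$. For the second, since the minibatch is i.i.d. the centered per-sample differences are independent, so $\mathbb{E}\|B^x\|^2\le\tfrac1b\,\mathbb{E}\|\nabla_x G(x_{k+1},y_{k+1};\zeta)-\nabla_x G(x_k,y_k;\zeta)\|^2$; this is exactly where the $1/b$ factor is born. Assumption \ref{azoass:Lip} then bounds the per-sample difference by a constant multiple of $L^2(\|x_{k+1}-x_k\|^2+\|y_{k+1}-y_k\|^2)$, and substituting $x_{k+1}-x_k=\eta_k(\tilde x_{k+1}-x_k)$ and $y_{k+1}-y_k=\eta_k(\tilde y_{k+1}-y_k)$ from \eqref{sfo1:update-x} and \eqref{sfo1:update-y} produces \eqref{safoclem4:1x}.

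For the $w$-estimate I would repeat the identical decomposition on \eqref{wk}, but now $\nabla_y\tilde G_k=\nabla_y G-\rho_k y$ carries the regularizer, and crucially the momentum correction in \eqref{wk} evaluates $\nabla_y\tilde G_k$ with parameter $\rho_k$ whereas the current term uses $\nabla_y\tilde G_{k+1}$ with parameter $\rho_{k+1}$. The fresh-noise and difference-noise pieces are treated exactly as above (via \eqref{ass2:Gy} and Assumption \ref{azoass:Lip}), but the mismatch $\rho_k\neq\rho_{k+1}$ leaves a residual proportional to $(\rho_k-\rho_{k+1})y$; bounding it with $\|y\|\le\sigma_y$ and carrying it through the batch estimate is what generates the additional $\tfrac{4(\rho_k^2-\rho_{k+1}^2)\sigma_y^2}{b}$ term in \eqref{safoclem4:1y}, Assumption \ref{sazocrho} guaranteeing $\rho_k^2-\rho_{k+1}^2\ge0$.

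I expect the main obstacle to be precisely this bookkeeping for the $y$-recursion: one must isolate the conditionally mean-zero pieces so that the martingale cross terms vanish under iterated expectation, and simultaneously track the time-varying regularization parameter so that the $\rho$-dependent slack is accounted for correctly. The $x$-recursion is comparatively routine because the regularizer is invisible to $\nabla_x$; the real care is needed in verifying the measurability/independence structure used to cancel the cross terms, and in confirming that the $\rho$-induced discrepancy in the $y$-update can be absorbed into the compactness bound $\sigma_y$ rather than into the (noise) terms that must remain conditionally centered.
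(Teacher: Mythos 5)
Your proposal is correct and follows essentially the same route as the paper: the same STORM-type error decomposition into the inherited error, the fresh-noise term, and the centered gradient-difference term, with the cross term killed by conditional expectation, Young's inequality splitting the noise pieces, the $1/b$ factor from the i.i.d.\ minibatch, and the Lipschitz bound combined with $x_{k+1}-x_k=\eta_k(\tilde x_{k+1}-x_k)$, $y_{k+1}-y_k=\eta_k(\tilde y_{k+1}-y_k)$. One small bookkeeping remark: in the exact centered decomposition for $w_{k+1}$ the regularizers cancel identically (both $\nabla_y\tilde G_{k+1}-\nabla_y g_{k+1}$ and $\nabla_y\tilde G_{k}-\nabla_y g_{k}$ are pure $G$-versus-$g$ noise), so the $(\rho_k-\rho_{k+1})y_k$ residual actually enters only at the step where the variance of the centered difference is bounded by the second moment of the raw per-sample difference $\nabla_y\tilde G_{k+1}(x_{k+1},y_{k+1};\zeta)-\nabla_y\tilde G_{k}(x_{k},y_{k};\zeta)$ --- which is exactly the paper's step, and together with $(\rho_k-\rho_{k+1})^2\le\rho_k^2-\rho_{k+1}^2$ (Assumption \ref{sazocrho}) and $\|y_k\|\le\sigma_y$ yields the stated $\frac{4(\rho_k^2-\rho_{k+1}^2)\sigma_y^2}{b}$ term.
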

	
	\begin{proof}
		Note that $\mathbb{E}[\nabla_{x}\tilde{G}_k(x_{k},y_{k};I_{k+1})]=\nabla _{x}g_k(x_{k},y_{k})$, $\mathbb{E}[\nabla _{x}\tilde{G}_{k+1}(x_{k+1},y_{k+1};I_{k+1})]=\nabla_{x}g_{k+1}(x_{k+1},y_{k+1})$, and by the definition of $v_{k+1}$, we have
		\begin{align}
			&\mathbb{E}[\|\nabla _{x}g_{k+1}(x_{k+1},y_{k+1})-v_{k+1}\|^2]\nonumber\\
			=&\mathbb{E}[\|\nabla _{x}g_{k+1}(x_{k+1},y_{k+1})-\nabla_x\tilde{G}_{k+1}( x_{k+1},y_{k+1};I_{k+1})\nonumber\\
			&-(1-\gamma_{k})[v_k-\nabla_x\tilde{G}_{k}( x_{k},y_{k};I_{k+1})]\|^2]\nonumber\\
			=&\mathbb{E}[\|(1-\gamma_{k})(\nabla _{x}g_k(x_{k},y_{k})-v_k)+\gamma_k(\nabla _{x}g_{k+1}(x_{k+1},y_{k+1})\nonumber\\
			&-\nabla_x\tilde{G}_{k+1}( x_{k+1},y_{k+1};I_{k+1}))+(1-\gamma_{k})[\nabla_xg_{k+1}( x_{k+1},y_{k+1})-\nabla_xg_k( x_{k},y_{k})\nonumber\\
			&-\nabla_x\tilde{G}_{k+1}( x_{k+1},y_{k+1};I_{k+1})+\nabla_x\tilde{G}_k( x_{k},y_{k};I_{k+1})]\|^2]\nonumber\\
			=&(1-\gamma_{k})^2\mathbb{E}[\|\nabla _{x}g_k(x_{k},y_{k})-v_k\|^2]+\mathbb{E}[\|\gamma_k(\nabla _{x}g_{k+1}(x_{k+1},y_{k+1})\nonumber\\
			&-\nabla_x\tilde{G}_{k+1}(x_{k+1},y_{k+1};I_{k+1}))+(1-\gamma_{k})[\nabla _xg_{k+1}( x_{k+1},y_{k+1})-\nabla_xg_k( x_{k},y_{k})\nonumber\\
			&-\nabla_x\tilde{G}_{k+1}( x_{k+1},y_{k+1};I_{k+1})+\nabla_x\tilde{G}_{k}( x_{k},y_{k};I_{k+1})]\|^2].\label{safoclem4:2}
		\end{align}
		By the fact that $\mathbb{E}[\|\zeta-\mathbb{E}\zeta\|^2]=\mathbb{E}[\|\zeta\|^2]-\|\mathbb{E}[\zeta\|^2]\le\mathbb{E}[\|\zeta\|^2]$,  $\mathbb{E}[\|\frac{1}{b}\sum_{j=1}^{b}\zeta_j\|^2]=\frac{1}{b}\mathbb{E}[\|\zeta_j\|^2]$ for i.i.d. random variables $\{\zeta_j\}_{j=1}^b$ with zero mean, $1-\gamma_{k}<1$ and \eqref{ass2:Gx}, we have
		\begin{align}
			&\mathbb{E}[\|\nabla_{x}g_{k+1}(x_{k+1},y_{k+1})-v_{k+1}\|^2]\nonumber\\
			\le&(1-\gamma_{k})^2\mathbb{E}[\|\nabla _{x}g_k(x_{k},y_{k})-v_k\|^2]+\frac{2\gamma_k^2\delta^2}{b}\nonumber\\
			&+\frac{2(1-\gamma_k)^2}{b}\mathbb{E}[\|\nabla_x\tilde{G}_{k+1}( x_{k+1},y_{k+1};\zeta_1^{k+1})-\nabla_x\tilde{G}_{k}( x_{k},y_{k};\zeta_1^{k+1})\|^2]\nonumber\\
			\le&(1-\gamma_{k})\mathbb{E}[\|\nabla _{x}g_k(x_{k},y_{k})-v_k\|^2]+\frac{2\gamma_k^2\delta^2}{b}+\frac{2L^2\eta_k^2}{b}\mathbb{E}[\|\tilde{x}_{k+1}-x_k\|^2+\|\tilde{y}_{k+1}-y_k\|^2].\label{safoclem4:3}
		\end{align}
		Similarly, we get
		\begin{align*}
			&\mathbb{E}[\|\nabla_{y}g_{k+1}(x_{k+1},y_{k+1})-w_{k+1}\|^2]\nonumber\\
			\le&(1-\theta_{k})^2\mathbb{E}[\|\nabla _{y}g_k(x_{k},y_{k})-w_k\|^2]+\frac{2\theta_k^2\delta^2}{b}\nonumber\\
			&+\frac{2(1-\theta_k)^2}{b}\mathbb{E}[\|\nabla_y\tilde{G}_{k+1}( x_{k+1},y_{k+1};\zeta_1^{k+1})-\nabla_y\tilde{G}_{k}( x_{k},y_{k};\zeta_1^{k+1})\|^2]\nonumber\\
			\le&(1-\theta_{k})\mathbb{E}[\|\nabla _{y}g_k(x_{k},y_{k})-w_k\|^2]+\frac{2\theta_k^2\delta^2}{b}+\frac{4(\rho_k^2-\rho_{k+1}^2)\sigma_y^2}{b}\nonumber\\
			&+\frac{4L^2\eta_k^2}{b}\mathbb{E}[\|\tilde{x}_{k+1}-x_k\|^2+\|\tilde{y}_{k+1}-y_k\|^2].
		\end{align*}
		The proof is completed.
	\end{proof}

	We now establish an important recursion for the FORMDA algorithm.
	\begin{lem}\label{safoclem45}
		Suppose that Assumptions  \ref{azoass:Lip}, \ref{sazocrho} and \ref{afoass:var} hold. Let $\{\left(x_k,y_k\right)\}$ be a sequence generated by Algorithm \ref{sfoalg:1}. Denote
		\begin{align*}
			&F_{k+1}(x_{k+1},y_{k+1})\\
			=&\mathbb{E}[\Phi_{k+1}(x_{k+1})]+D_k^{(1)}\mathbb{E}[\|\nabla_{x}g_{k+1}(x_{k+1},y_{k+1})-v_{k+1}\|^2]\\
			&+\frac{8\alpha_kL^2}{\beta\rho_{k+1}}\mathbb{E}[\|y_{k+1}-y_{k+1}^*(x_{k+1})\|^2]+D_k^{(2)}\mathbb{E}[\|\nabla _{y}g_{k+1}(x_{k+1},y_{k+1})-w_{k+1}\|^2],\\
			&S_{k+1}(x_{k+1},y_{k+1})\\
			=&F_{k+1}(x_{k+1},y_{k+1})-\frac{40\alpha_{k+1}L^2(\rho_{k+1}-\rho_{k+2})}{\eta_{k+1}\beta^2\rho_{k+2}^3}\mathbb{E}[\|y_{k+1}^*(x_{k+1})\|^2]+\frac{4D_{k+1}^{(2)}\rho_{k+1}^2\sigma_y^2}{b}.
		\end{align*}
		Then $\forall k \ge 1$, 
		\begin{align}\label{safoclem45:1}
			&S_{k+1}(x_{k+1},y_{k+1})-S_{k}(x_{k},y_{k})\nonumber\\
			\le&(\frac{8\alpha_kL^2}{\beta\rho_{k+1}}-\frac{8\alpha_{k-1}L^2}{\beta\rho_{k}})\mathbb{E}[\|y_k-y_k^*(x_k)\|^2]\nonumber\\
			&+(2\eta_k\alpha_k-D_k^{(1)}\gamma_{k}+D_k^{(1)}-D_{k-1}^{(1)})\mathbb{E}[\|\nabla _{x}g_k(x_{k},y_{k})-v_k\|^2]\nonumber\\
			&+(\frac{40\eta_k\alpha_kL^2}{\rho_{k+1}^2}-D_k^{(2)}\theta_{k}+D_k^{(2)}-D_{k-1}^{(2)})\mathbb{E}[\|\nabla _{y}g_k(x_{k},y_{k})-w_k\|^2]\nonumber\\
			&-(\frac{3\eta_k}{4\alpha_k}-\frac{L^2\eta_k^2}{\rho_{k+1}}-\frac{40L^4\eta_k\alpha_k}{\rho_{k+1}^4\beta^2}-\frac{4L^2\eta_k^2(D_k^{(1)}d_x+D_k^{(2)}d_y)}{b})\mathbb{E}[\|\tilde{x}_{k+1}-x_k\|^2]\nonumber\\
			&-(\frac{6\alpha_kL^2\eta_k}{\rho_{k+1}\beta}-\frac{4L^2\eta_k^2(D_k^{(1)}d_x+D_k^{(2)}d_y)}{b})\mathbb{E}[\|\tilde{y}_{k+1} -y_k\|^2]\nonumber\\
			&+(\frac{40\alpha_kL^2(\rho_k-\rho_{k+1})}{\eta_{k}\beta^2\rho_{k+1}^3}-\frac{40\alpha_{k+1}L^2(\rho_{k+1}-\rho_{k+2})}{\eta_{k+1}\beta^2\rho_{k+1}^3})\sigma_y^2+\frac{\rho_k-\rho_{k+1}}{2}\sigma_y^2\nonumber\\
			&+\frac{2\delta^2(\gamma_k^2D_k^{(1)}+\theta_k^2D_k^{(2)})}{b}+\frac{4\rho_{k+1}^2(D_{k+1}^{(2)}-D_k^{(2)})\sigma_y^2}{b},
		\end{align}
		where $\sigma_y=\max\{\|y\| \mid y\in\mathcal{Y}\}$, $D_k^{(1)}>0$ and $D_k^{(2)}>0$.
	\end{lem}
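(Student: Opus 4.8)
The plan is to treat $S_{k}$ and $F_{k}$ as Lyapunov (potential) functions and to prove \eqref{safoclem45:1} by expanding $S_{k+1}-S_{k}$ directly from the definitions and then substituting the four preceding lemmas, one for each component of $F$. First I would write $S_{k+1}-S_{k}=(F_{k+1}-F_{k})$ plus the differences of the two correction terms appended to $F$ (the $\|y_{k+1}^{*}(x_{k+1})\|^{2}$ term and the $\tfrac{4D_{k+1}^{(2)}\rho_{k+1}^{2}\sigma_{y}^{2}}{b}$ term). I would then split $F_{k+1}-F_{k}$ into four telescoped pieces: the $\Phi$-piece $\mathbb{E}[\Phi_{k+1}(x_{k+1})]-\mathbb{E}[\Phi_{k}(x_{k})]$, bounded by Lemma~\ref{safoclem2}; the $v$-piece, to which I apply \eqref{safoclem4:1x} with weight $D_{k}^{(1)}$ and against which I subtract the $D_{k-1}^{(1)}$-weighted term of $F_{k}$; the $y$-gap piece, to which I apply Lemma~\ref{safoclem3} with weight $\tfrac{8\alpha_{k}L^{2}}{\beta\rho_{k+1}}$; and the $w$-piece, handled by \eqref{safoclem4:1y} with weight $D_{k}^{(2)}$. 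The step-size conditions required by Lemmas~\ref{safoclem2} and \ref{safoclem3} ($\rho_{k}\le L$, $0<\eta_{k}\le1$, $0<\beta\le\tfrac{1}{6L}$) are assumed to hold throughout.

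The observation that makes the recursion close is the choice of the weight $\tfrac{8\alpha_{k}L^{2}}{\beta\rho_{k+1}}$ on the $y$-gap term. Multiplying Lemma~\ref{safoclem3} by this weight, its contraction factor $1-\tfrac{\eta_{k}\beta\rho_{k+1}}{4}$ produces a term $-2\eta_{k}\alpha_{k}L^{2}\|y_{k}-y_{k}^{*}(x_{k})\|^{2}$ that exactly cancels the $+2\eta_{k}\alpha_{k}L^{2}\|y_{k}-y_{k}^{*}(x_{k})\|^{2}$ from Lemma~\ref{safoclem2}; what survives on $\|y_{k}-y_{k}^{*}(x_{k})\|^{2}$ is precisely $\tfrac{8\alpha_{k}L^{2}}{\beta\rho_{k+1}}-\tfrac{8\alpha_{k-1}L^{2}}{\beta\rho_{k}}$. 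The same weight turns the $\tfrac{5\eta_{k}\beta}{\rho_{k+1}}$, $\tfrac{5L^{2}\eta_{k}}{\rho_{k+1}^{3}\beta}$ and $-\tfrac{3\eta_{k}}{4}$ coefficients of Lemma~\ref{safoclem3} into the $\tfrac{40\eta_{k}\alpha_{k}L^{2}}{\rho_{k+1}^{2}}$ contribution to the $\|\nabla_{y}g_{k}-w_{k}\|^{2}$ coefficient and into the $\tfrac{40L^{4}\eta_{k}\alpha_{k}}{\rho_{k+1}^{4}\beta^{2}}$ and $\tfrac{6\alpha_{k}L^{2}\eta_{k}}{\rho_{k+1}\beta}$ contributions to the $\|\tilde{x}_{k+1}-x_{k}\|^{2}$ and $\|\tilde{y}_{k+1}-y_{k}\|^{2}$ coefficients. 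Combining the $2\eta_{k}\alpha_{k}$ (resp.\ $\tfrac{40\eta_{k}\alpha_{k}L^{2}}{\rho_{k+1}^{2}}$) from Lemma~\ref{safoclem2} (resp.\ the weighted Lemma~\ref{safoclem3}) with the $(1-\gamma_{k})$ (resp.\ $(1-\theta_{k})$) factors of \eqref{safoclem4:1x}--\eqref{safoclem4:1y} gives the $2\eta_{k}\alpha_{k}-D_{k}^{(1)}\gamma_{k}+D_{k}^{(1)}-D_{k-1}^{(1)}$ and $\tfrac{40\eta_{k}\alpha_{k}L^{2}}{\rho_{k+1}^{2}}-D_{k}^{(2)}\theta_{k}+D_{k}^{(2)}-D_{k-1}^{(2)}$ coefficients; the movement contributions $\tfrac{2L^{2}\eta_{k}^{2}}{b}$ and $\tfrac{4L^{2}\eta_{k}^{2}}{b}$ of \eqref{safoclem4:1x}--\eqref{safoclem4:1y} I would over-bound by $\tfrac{4L^{2}\eta_{k}^{2}(D_{k}^{(1)}d_{x}+D_{k}^{(2)}d_{y})}{b}$ using $d_{x},d_{y}\ge1$, and the noise terms accumulate to $\tfrac{2\delta^{2}(\gamma_{k}^{2}D_{k}^{(1)}+\theta_{k}^{2}D_{k}^{(2)})}{b}$.

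It then remains to reconcile the regularization-drift residuals, which is exactly what the two correction terms in $S$ are designed for. Lemma~\ref{safoclem3} (weighted) leaves $\tfrac{40\alpha_{k}L^{2}(\rho_{k}-\rho_{k+1})}{\eta_{k}\beta^{2}\rho_{k+1}^{3}}(\|y_{k+1}^{*}(x_{k+1})\|^{2}-\|y_{k}^{*}(x_{k})\|^{2})$: the $\|y_{k}^{*}(x_{k})\|^{2}$ half cancels against the $\|y^{*}\|^{2}$ correction inherited from $S_{k}$, while the $\|y_{k+1}^{*}(x_{k+1})\|^{2}$ half combines with the $S_{k+1}$ correction and, after invoking Assumption~\ref{sazocrho} to replace $\rho_{k+2}^{3}$ by $\rho_{k+1}^{3}$ and bounding $\|y_{k+1}^{*}(x_{k+1})\|^{2}\le\sigma_{y}^{2}$, yields the $\sigma_{y}^{2}$ line of \eqref{safoclem45:1}. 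Analogously, the $\tfrac{4(\rho_{k}^{2}-\rho_{k+1}^{2})\sigma_{y}^{2}}{b}$ term of \eqref{safoclem4:1y} (weighted by $D_{k}^{(2)}$) telescopes against the $\tfrac{4D^{(2)}\rho^{2}\sigma_{y}^{2}}{b}$ corrections in $S_{k+1}$ and $S_{k}$ to leave exactly $\tfrac{4\rho_{k+1}^{2}(D_{k+1}^{(2)}-D_{k}^{(2)})\sigma_{y}^{2}}{b}$, and the $\tfrac{\rho_{k}-\rho_{k+1}}{2}\sigma_{y}^{2}$ term is carried over unchanged from Lemma~\ref{safoclem2}.

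The main obstacle I anticipate is not any single estimate but the bookkeeping: keeping every sign correct across the four substitutions and verifying that the two terms appended to $F$ to build $S$ are precisely the ones making the $\|y^{*}\|^{2}$ and $\rho^{2}\sigma_{y}^{2}/b$ residuals telescope rather than accumulate. The one point demanding genuine care beyond algebra is the monotonicity handling of the $\|y^{*}\|^{2}$ residual, where Assumption~\ref{sazocrho} must be used both to pass from $\rho_{k+2}^{3}$ to $\rho_{k+1}^{3}$ and to ensure $\rho_{k}-\rho_{k+1}\ge0$ so that $\|y_{k+1}^{*}(x_{k+1})\|^{2}\le\sigma_{y}^{2}$ can be applied in the correct direction; the analogous nonnegativity $\rho_{k}^{2}-\rho_{k+1}^{2}\ge0$ is what legitimizes the corresponding step for the $w$-piece.
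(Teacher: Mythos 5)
Your proposal is correct and follows essentially the same route as the paper's own proof: the paper likewise bounds $F_{k+1}-F_k$ by combining the definition of $F_k$ with Lemmas \ref{safoclem2}--\ref{safoclem4} (using exactly the weights $\tfrac{8\alpha_k L^2}{\beta\rho_{k+1}}$, $D_k^{(1)}$, $D_k^{(2)}$ you identify), and then passes to $S_k$ via its definition together with $\|y_{k+1}^*(x_{k+1})\|\le\sigma_y$ and Assumption \ref{sazocrho}. Your write-up is in fact more explicit than the paper's one-line conclusion, correctly spelling out the cancellation of the $\|y_k-y_k^*(x_k)\|^2$ and $\|y_k^*(x_k)\|^2$ terms, the dimension-based over-bound $d_x,d_y\ge 1$ behind the $\tfrac{4L^2\eta_k^2(D_k^{(1)}d_x+D_k^{(2)}d_y)}{b}$ coefficient, and the exact telescoping of the $\rho^2\sigma_y^2/b$ corrections.
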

	
	\begin{proof}
		By the definition of $F_{k}(x_{k},y_{k})$ and Lemmas \ref{safoclem2}-\ref{safoclem4}, we get
		\begin{align}\label{safocthm1:2}
			&F_{k+1}(x_{k+1},y_{k+1})-F_{k}(x_{k},y_{k})\nonumber\\
			\le&(\frac{8\alpha_kL^2}{\beta\rho_{k+1}}-\frac{8\alpha_{k-1}L^2}{\beta\rho_{k}})\mathbb{E}[\|y_k-y_k^*(x_k)\|^2]\nonumber\\
			&+(2\eta_k\alpha_k-D_k^{(1)}\gamma_{k}+D_k^{(1)}-D_{k-1}^{(1)})\mathbb{E}[\|\nabla _{x}g_k(x_{k},y_{k})-v_k\|^2]\nonumber\\
			&+(\frac{40\eta_k\alpha_kL^2}{\rho_{k+1}^2}-D_k^{(2)}\theta_{k}+D_k^{(2)}-D_{k-1}^{(2)})\mathbb{E}[\|\nabla _{y}g_k(x_{k},y_{k})-w_k\|^2]\nonumber\\
			&-(\frac{3\eta_k}{4\alpha_k}-\frac{L^2\eta_k^2}{\rho_{k+1}}-\frac{40L^4\eta_k\alpha_k}{\rho_{k+1}^4\beta^2}-\frac{4L^2\eta_k^2(D_k^{(1)}d_x+D_k^{(2)}d_y)}{b})\mathbb{E}[\|\tilde{x}_{k+1}-x_k\|^2]\nonumber\\
			&-(\frac{6\alpha_kL^2\eta_k}{\rho_{k+1}\beta}-\frac{4L^2\eta_k^2(D_k^{(1)}d_x+D_k^{(2)}d_y)}{b})\mathbb{E}[\|\tilde{y}_{k+1} -y_k\|^2]\nonumber\\
			&+\frac{40\alpha_kL^2(\rho_k-\rho_{k+1})}{\eta_{k}\beta^2\rho_{k+1}^3}(\mathbb{E}[\|y_{k+1}^*(x_{k+1})\|^2]-\mathbb{E}[\|y_{k}^*(x_k)\|^2])+\frac{\rho_k-\rho_{k+1}}{2}\sigma_y^2\nonumber\\
			&+\frac{2\delta^2(\gamma_k^2D_k^{(1)}+\theta_k^2D_k^{(2)})}{b}+\frac{4D_k^{(2)}(\rho_k^2-\rho_{k+1}^2)\sigma_y^2}{b}.
		\end{align}
		The proof is then completed by the definition of $S_{k}(x_{k},y_{k})$ and $\sigma_y$.
	\end{proof}

	Define $T(\varepsilon):=\min\{k \mid \|\nabla \mathcal{G}^{\alpha_k,\beta }(x_k,y_k)\|\leq \varepsilon \}$, where $\varepsilon>0$ is a given target accuracy. Denote
	\begin{equation*}
		\nabla \tilde{\mathcal{G}}_k^{\alpha_k,\beta}( x,y) =\left( \begin{array}{c}
			\frac{1}{\alpha_k}\left( x-\mathcal{P}_{\mathcal{X}}^{\frac{1}{\alpha_k}}\left( x-\alpha_k\nabla _xg_k( x,y) \right) \right)\\
			\frac{1}{\beta} \left( y-\mathcal{P}_{\mathcal{Y}}^{\frac{1}{\beta} }\left( y+ \beta\nabla _yg_k( x,y) \right) \right)
		\end{array} \right) .
	\end{equation*}
	It can be easily checked that
	$$\|\nabla \mathcal{G}^{\alpha_k,\beta}(x,y)\|\le\|\nabla \tilde{\mathcal{G}}_k^{\alpha_k,\beta}(x,y)\|+\rho_{k}\|y\|,$$
	and $\|\nabla \mathcal{G}^{\alpha_k,\beta}(x,y)\|=\|\nabla \tilde{\mathcal{G}}_k^{\alpha_k,\beta}(x,y)\|$ if $\rho_k=0$.
	Next, we provide an upper bound estimate of $\|\nabla \tilde{\mathcal{G}}_k^{\alpha_k,\beta}(x,y)\|$.
	
	\begin{lem}\label{safoclem5}
		Suppose that Assumption \ref{azoass:Lip} holds. Let $\{\left(x_k,y_k\right)\}$ be a sequence generated by Algorithm \ref{sfoalg:1}, then $\forall k \ge 1$, 
		\begin{align}\label{safoclem5:1}
			&\mathbb{E}[\|\nabla \tilde{\mathcal{G}}_k^{\alpha_k,\beta}(x_k,y_k)\|^2]\nonumber\\
			\le&\frac{2}{\alpha_k^2} \mathbb{E}[\|\tilde{x}_{k+1}-x_k\|^2]+\frac{2}{\beta^2} \mathbb{E}[\|\tilde{y}_{k+1}-y_k\|^2]+2\mathbb{E}[\|\nabla_xg_k\left( x_k,y_{k}\right)-v_k\|^2]\nonumber\\
			&+2\mathbb{E}[\|\nabla_yg_k\left( x_k,y_{k} \right)-w_k\|^2].
		\end{align}
	\end{lem}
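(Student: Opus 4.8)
The plan is to bound the $x$- and $y$-blocks of $\nabla \tilde{\mathcal{G}}_k^{\alpha_k,\beta}(x_k,y_k)$ separately and then add them, since the squared Euclidean norm of the stacked vector is the sum of the squared norms of its two components. The only tools needed are the nonexpansiveness (i.e. $1$-Lipschitz property) of the projection operators $\mathcal{P}_{\mathcal{X}}^{1/\alpha_k}$ and $\mathcal{P}_{\mathcal{Y}}^{1/\beta}$, together with the elementary inequality $\|u+v\|^2\le 2\|u\|^2+2\|v\|^2$.

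For the $x$-block, I would first note that the update \eqref{sfo1:update-tx} gives $\tilde{x}_{k+1}=\mathcal{P}_{\mathcal{X}}^{1/\alpha_k}(x_k-\alpha_k v_k)$, whereas the $x$-block of the gap is built from $\mathcal{P}_{\mathcal{X}}^{1/\alpha_k}(x_k-\alpha_k\nabla_x g_k(x_k,y_k))$, i.e. the same map evaluated with the true gradient in place of the momentum estimate $v_k$. Writing $\frac{1}{\alpha_k}(x_k-\mathcal{P}_{\mathcal{X}}^{1/\alpha_k}(x_k-\alpha_k\nabla_x g_k(x_k,y_k)))=\frac{1}{\alpha_k}(x_k-\tilde{x}_{k+1})+\frac{1}{\alpha_k}(\tilde{x}_{k+1}-\mathcal{P}_{\mathcal{X}}^{1/\alpha_k}(x_k-\alpha_k\nabla_x g_k(x_k,y_k)))$ and applying the $2$-inequality, the first term contributes exactly $\frac{2}{\alpha_k^2}\|\tilde{x}_{k+1}-x_k\|^2$. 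For the second term, nonexpansiveness of $\mathcal{P}_{\mathcal{X}}^{1/\alpha_k}$ yields $\|\tilde{x}_{k+1}-\mathcal{P}_{\mathcal{X}}^{1/\alpha_k}(x_k-\alpha_k\nabla_x g_k(x_k,y_k))\|\le\alpha_k\|v_k-\nabla_x g_k(x_k,y_k)\|$, so dividing by $\alpha_k$ and squaring (with the factor $2$) produces $2\|\nabla_x g_k(x_k,y_k)-v_k\|^2$; the $\alpha_k$ from the Lipschitz bound cancels the leading $1/\alpha_k$.

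For the $y$-block, the identical argument with \eqref{sfo1:update-ty} (observing the ascent sign, so the relevant point is $y_k+\beta w_k$) and nonexpansiveness of $\mathcal{P}_{\mathcal{Y}}^{1/\beta}$ gives $\frac{2}{\beta^2}\|\tilde{y}_{k+1}-y_k\|^2+2\|\nabla_y g_k(x_k,y_k)-w_k\|^2$. Summing the two block bounds and taking expectations delivers \eqref{safoclem5:1}.

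I do not anticipate a genuine obstacle here; the lemma is essentially a bookkeeping estimate. The only point requiring a little care is confirming that the weighted projection $\mathcal{P}^{1/\alpha}$ is $1$-Lipschitz in its argument (it is, being the Euclidean projection onto the closed convex set) and tracking the step-size constants so the factor from nonexpansiveness cancels the prefactor in front of each projection, leaving the clean coefficients $2/\alpha_k^2$, $2/\beta^2$, and $2$ in the final bound.
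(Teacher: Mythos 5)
Your proposal is correct and follows essentially the same route as the paper's proof: decompose each block through the algorithm's update point $\tilde{x}_{k+1}$ (resp.\ $\tilde{y}_{k+1}$), invoke nonexpansiveness of the projection to convert the discrepancy into $\|v_k-\nabla_x g_k(x_k,y_k)\|$ (resp.\ $\|w_k-\nabla_y g_k(x_k,y_k)\|$), and combine with the inequality $\|u+v\|^2\le 2\|u\|^2+2\|v\|^2$ before taking expectations. The only difference is cosmetic—the paper bounds the norms by the triangle inequality first and squares afterward, whereas you square the decomposition directly—so the two arguments coincide step for step.
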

	
	\begin{proof}
		By \eqref{sfo1:update-ty}, the nonexpansive property of the projection operator, we immediately get
		\begin{align}
			\| \frac{1}{\beta} ( y_k-\mathcal{P}_{\mathcal{Y}}^{\frac{1}{\beta}}\left( y_k+ \beta\nabla _yg_k\left( x_k,y_k\right) \right) ) \|
			\le\frac{1}{\beta} \|\tilde{y}_{k+1}-y_k\|+\|\nabla _yg_k\left( x_k,y_{k} \right)-w_k\|.\label{safoclem5:2}
		\end{align}
		On the other hand, by \eqref{sfo1:update-tx} and the nonexpansive property of the projection operator, we have
		\begin{align}
			\| \frac{1}{\alpha_k} ( x_k-\mathcal{P}_{\mathcal{X}}^{\frac{1}{\alpha_k}}\left( x_k- \alpha_k\nabla _xg_k\left( x_k,y_k\right) \right)) \|
			\le\frac{1}{\alpha_k} \|\tilde{x}_{k+1}-x_k\|+\|\nabla _xg_k\left( x_k,y_{k} \right)-v_k\|.\label{safoclem5:3}
		\end{align}
		Combing \eqref{safoclem5:2}, \eqref{safoclem5:3}, using Cauchy-Schwarz inequality and taking the expectation, we complete the proof.
	\end{proof}
	
	\begin{thm}\label{safocthm1}
		Suppose that Assumptions \ref{azoass:Lip}, \ref{sazocrho} and \ref{afoass:var} hold. Let $\{\left(x_k,y_k\right)\}$ be a sequence generated by Algorithm \ref{sfoalg:1}. 
		For any given $k\geq 1$, let
		\begin{align*}
			\eta_k&=\frac{1}{(k+2)^{5/13}},\quad \alpha_k=\frac{a_4}{(k+2)^{4/13}},\quad \rho_{k}=\frac{L}{(k+1)^{2/13}},\\
			\gamma_k&=\frac{a_5}{(k+2)^{12/13}},\quad \theta_k=\frac{a_6}{(k+2)^{8/13}}, \quad D_k^{(1)}=a_1(k+2)^{3/13},\\ D_k^{(2)}&=a_2(k+2)^{3/13},
		\end{align*}
		with 
		\begin{align*}
			0&< a_1\le\min\{\frac{b}{32a_4L^2},\frac{ba_4}{2L\beta}\},\quad 0<a_2\le\min\{\frac{b}{32a_4L^2},\frac{ba_4}{2L\beta}\},\\
			0&<a_4\le\min\{\frac{1}{8L},\frac{\beta}{8\sqrt{5}}\},\quad a_5\ge\frac{4a_4}{a_1}+\frac{12}{13},\quad a_6\ge\frac{80a_4}{a_2}+\frac{12}{13}.
		\end{align*}
		If $0<\beta\le\frac{1}{6L}$, then for any given $\varepsilon>0$, $$	T( \varepsilon)\le \max\{\tilde{T}(\varepsilon),(\frac{2L\sigma_y}{\varepsilon})^{\frac{13}{2}}-1\},$$
		where $\tilde{T}(\varepsilon)$ satisfies that  
		\begin{equation}
			\frac{\varepsilon^2}{4}\le\frac{C_1+C_2\ln(\tilde{T}(\varepsilon)+2)}{d_1a_4(\frac{13}{4}(\tilde{T}(\varepsilon)+3)^{4/13}-\frac{13\cdot3^{4/13}}{4})},\label{thm2.1:1}
		\end{equation}
		with
		\begin{align*}
			C_1&=S_{1}(x_{1},y_{1})-\underbar{S}+\frac{\rho_1}{2}\sigma_y^2+\frac{40\alpha_1L^2\rho_1}{\eta_{1}\beta^2\rho_2^3}\sigma_y^2,\quad d_1\le\min\{\frac{1}{8},L\beta,\frac{a_1a_5}{4a_4},\frac{a_2a_6}{4a_4}\},\\
			C_2&=\frac{2\delta^2(a_5^2a_1+a_6^2a_2)}{b}+\frac{12a_2L^2\sigma_y^2}{13b},\quad\underbar{S}=\min\limits_{x\in\mathcal{X}}\min\limits_{y\in\mathcal{Y}}S_k(x,y),\\
			\sigma_y&=\max\{\|y\| \mid y\in\mathcal{Y}\}.
		\end{align*}
	\end{thm}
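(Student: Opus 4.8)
The plan is to push the prescribed schedules through the one-step recursion of Lemma~\ref{safoclem45}, verify that the stated bounds on $a_1,\dots,a_6$ and $\beta$ render every coefficient on the right-hand side of \eqref{safoclem45:1} of the sign and order we need, telescope in $k$, and then convert the accumulated estimate into a bound on the stationarity gap by combining Lemma~\ref{safoclem5} with the relation $\|\nabla\mathcal{G}^{\alpha_k,\beta}(x,y)\|\le\|\nabla\tilde{\mathcal{G}}_k^{\alpha_k,\beta}(x,y)\|+\rho_k\|y\|$. Substituting $\eta_k=(k+2)^{-5/13}$, $\alpha_k=a_4(k+2)^{-4/13}$, $\rho_k=L(k+1)^{-2/13}$, $\gamma_k=a_5(k+2)^{-12/13}$, $\theta_k=a_6(k+2)^{-8/13}$, $D_k^{(1)}=a_1(k+2)^{3/13}$ and $D_k^{(2)}=a_2(k+2)^{3/13}$, I would first record the growth order in $(k+2)$ of each bracket in \eqref{safoclem45:1}.

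The sign verification is the heart of the argument. The coefficient of $\mathbb{E}[\|y_k-y_k^*(x_k)\|^2]$ is nonpositive because $\alpha_k/\rho_{k+1}=\frac{a_4}{L}(k+2)^{-2/13}$ decreases in $k$. In the $v$-variance bracket, $2\eta_k\alpha_k$ and $D_k^{(1)}\gamma_k$ are both of order $(k+2)^{-9/13}$ whereas $D_k^{(1)}-D_{k-1}^{(1)}$ is of the smaller order $(k+2)^{-10/13}$; the condition $a_5\ge\frac{4a_4}{a_1}+\frac{12}{13}$ is precisely what makes $2\eta_k\alpha_k-D_k^{(1)}\gamma_k+D_k^{(1)}-D_{k-1}^{(1)}\le0$ with a definite negative margin of order $(k+2)^{-9/13}$, and symmetrically $a_6\ge\frac{80a_4}{a_2}+\frac{12}{13}$ dominates the $w$-variance bracket whose leading piece $\frac{40\eta_k\alpha_kL^2}{\rho_{k+1}^2}$ has order $(k+2)^{-5/13}$. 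The coefficient of $\mathbb{E}[\|\tilde{x}_{k+1}-x_k\|^2]$ is governed by $\frac{3\eta_k}{4\alpha_k}-\frac{40L^4\eta_k\alpha_k}{\rho_{k+1}^4\beta^2}=\big(\frac{3}{4a_4}-\frac{40a_4}{\beta^2}\big)(k+2)^{-1/13}$ plus lower-order terms, which stays bounded below by a positive multiple of $(k+2)^{-1/13}$ once $a_4\le\frac{\beta}{8\sqrt5}$, the residual $\frac{4L^2\eta_k^2(D_k^{(1)}d_x+D_k^{(2)}d_y)}{b}$ being absorbed through $a_1,a_2\le\frac{b}{32a_4L^2}$; the $\mathbb{E}[\|\tilde{y}_{k+1}-y_k\|^2]$ coefficient is treated the same way with $a_1,a_2\le\frac{ba_4}{2L\beta}$.

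With all error coefficients nonpositive and the two distance coefficients bounded below, summing \eqref{safoclem45:1} over $k=1,\dots,T$ telescopes the left-hand side to $S_1(x_1,y_1)-S_{T+1}(x_{T+1},y_{T+1})\le S_1(x_1,y_1)-\underline{S}$, while $\sum_k\frac{\rho_k-\rho_{k+1}}{2}\sigma_y^2\le\frac{\rho_1}{2}\sigma_y^2$ and the $y^*$-bracket telescopes (after $\|y_k^*\|^2\le\sigma_y^2$) to at most $\frac{40\alpha_1L^2\rho_1}{\eta_1\beta^2\rho_2^3}\sigma_y^2$, the three pieces assembling into $C_1$. The only non-summable residuals are $\frac{2\delta^2\theta_k^2D_k^{(2)}}{b}=\frac{2\delta^2a_2a_6^2}{b}(k+2)^{-1}$ together with the $\frac{4\rho_{k+1}^2(D_{k+1}^{(2)}-D_k^{(2)})\sigma_y^2}{b}$ term, both of order $(k+2)^{-1}$, whose partial sums grow like $\ln(T+2)$ and produce $C_2$. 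Introducing weights $w_k=d_1a_4(k+2)^{-9/13}$ chosen so that $w_k$ times each term of \eqref{safoclem5:1} is dominated by the matching positive coefficient of the telescoped inequality—this is exactly what $d_1\le\min\{\frac18,L\beta,\frac{a_1a_5}{4a_4},\frac{a_2a_6}{4a_4}\}$ encodes—gives $\sum_{k=1}^Tw_k\mathbb{E}[\|\nabla\tilde{\mathcal{G}}_k^{\alpha_k,\beta}(x_k,y_k)\|^2]\le C_1+C_2\ln(T+2)$, and since $\sum_{k=1}^Tw_k\ge d_1a_4\big(\frac{13}{4}(T+3)^{4/13}-\frac{13\cdot3^{4/13}}{4}\big)$ by an integral comparison, the minimal gap up to time $T$ is bounded by the right-hand side of \eqref{thm2.1:1}. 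Finally, for $k\ge(\frac{2L\sigma_y}{\varepsilon})^{13/2}-1$ we have $\rho_k\sigma_y=\frac{L\sigma_y}{(k+1)^{2/13}}\le\frac{\varepsilon}{2}$, so restricting the averaging window to such $k$ (which does not change the order of $\sum w_k$) produces an index at which $\mathbb{E}[\|\nabla\tilde{\mathcal{G}}_k^{\alpha_k,\beta}\|^2]\le\frac{\varepsilon^2}{4}$, hence $\mathbb{E}[\|\nabla\tilde{\mathcal{G}}_k^{\alpha_k,\beta}\|]\le\frac{\varepsilon}{2}$ by Jensen's inequality, while $\rho_k\|y_k\|\le\frac{\varepsilon}{2}$; via the displayed relation this yields $\mathbb{E}[\|\nabla\mathcal{G}^{\alpha_k,\beta}(x_k,y_k)\|]\le\varepsilon$ and hence $T(\varepsilon)\le\max\{\tilde T(\varepsilon),(\frac{2L\sigma_y}{\varepsilon})^{13/2}-1\}$.

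The main obstacle will be the simultaneous calibration carried out in the second and third paragraphs: the $x$- and $y$-distance coefficients, the two variance coefficients, and the averaging weight $w_k$ are all coupled through $a_4$ and $d_1$, so one must check that the admissible box for $(a_1,\dots,a_6,d_1,\beta)$ is genuinely nonempty and that the retained negative margins are uniformly of the exact orders $(k+2)^{-1/13}$ and $(k+2)^{-9/13}$ needed to absorb every term of \eqref{safoclem5:1}. Getting the bookkeeping of the $O((k+2)^{-1})$ residuals right—so that they contribute only the logarithmic factor $C_2\ln(T+2)$ rather than a polynomial—together with reconciling the optimization time scale $\tilde T(\varepsilon)$ and the regularization time scale $(\frac{2L\sigma_y}{\varepsilon})^{13/2}-1$ into the single $\max$, is where the real care is required.
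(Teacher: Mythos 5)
Your proposal is correct and follows essentially the same route as the paper's proof: substituting the schedules into the recursion of Lemma~\ref{safoclem45}, using the constraints on $a_1,\dots,a_6$ to make the variance and distance coefficients negative with exactly the margins $(k+2)^{-9/13}$, $(k+2)^{-5/13}$, $(k+2)^{-1/13}$, $(k+2)^{-7/13}$, telescoping with $C_1$ absorbing the initial/regularization terms and $C_2$ the logarithmic $(k+2)^{-1}$ residuals, weighting by $d_1\eta_k\alpha_k$ against Lemma~\ref{safoclem5}, and finally reconciling the optimization and regularization time scales via the stated $\max$. The details you flag as delicate (the calibration of $a_5,a_6$ and the sign of the $\tilde{x}$-coefficient) are verified in the paper exactly as you describe.
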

	
	\begin{proof}
		By the definition of $\alpha_k$, $\rho_k$ and $D_k^{(1)}$, we  have $\frac{\alpha_k}{\rho_{k+1}}\le\frac{\alpha_{k-1}}{\rho_{k}}$ and
		\begin{align*}
			D_k^{(1)}-D_{k-1}^{(1)}&\le\frac{3a_1(k+1)^{-10/13}}{13}\le\frac{3a_1\cdot2^{10/13}(2+k)^{-10/13}}{13}\nonumber\\
			&\le\frac{6a_1(2+k)^{-9/13}}{13}.
		\end{align*}
		Then, by the setting of $a_5$, we get
		\begin{align*}
			2\eta_k\alpha_k-D_k^{(1)}\gamma_{k}+D_k^{(1)}-D_{k-1}^{(1)}&\le(2a_4-a_1a_5+\frac{6a_1}{13})(2+k)^{-9/13}\nonumber\\
			&\le-\frac{a_1a_5}{2}(2+k)^{-9/13}.
		\end{align*}
		Similarly, we also have
		\begin{align*}
			\frac{40\eta_k\alpha_kL^2}{\rho_{k+1}^2}-D_k^{(2)}\theta_{k}+D_k^{(2)}-D_{k-1}^{(2)}&\le(40a_4-a_2a_6+\frac{6a_2}{13})(2+k)^{-5/13}\nonumber\\
			&\le-\frac{a_2a_6}{2}(2+k)^{-5/13}.
		\end{align*}
		By the settings of $a_1, a_2, a_4$, we obtain
		\begin{align*}
			&-(\frac{3\eta_k}{4\alpha_k}-\frac{L^2\eta_k^2}{\rho_{k+1}}-\frac{40L^4\eta_k\alpha_k}{\rho_{k+1}^4\beta^2}-\frac{4L^2\eta_k^2(D_k^{(1)}d_x+D_k^{(2)}d_y)}{b})\\
			\le&(-\frac{3}{4a_4}+L+\frac{40a_4}{\beta^2}+\frac{4L^2(a_1d_x+a_2d_y)}{b})(2+k)^{-1/13}\nonumber\\
			\le&-\frac{1}{4a_4}(2+k)^{-1/13},
		\end{align*}
		and
		\begin{align*}
			&-(\frac{6\alpha_kL^2\eta_k}{\rho_{k+1}\beta}-\frac{4L^2\eta_k^2(D_k^{(1)}d_x+D_k^{(2)}d_y)}{b})\\
			\le&(-\frac{6La_4}{\beta}+\frac{4L^2(a_1d_x+a_2d_y)}{b})(2+k)^{-7/13}
			\le-\frac{2La_4}{\beta}(2+k)^{-7/13}.
		\end{align*}
		Plugging these inequalities into \eqref{safocthm1:2}, we get
		\begin{align}\label{safocthm1:3}
			&S_{k+1}(x_{k+1},y_{k+1})-S_{k}(x_{k},y_{k})\nonumber\\
			\le&-\frac{a_1a_5}{2}(2+k)^{-9/13}\mathbb{E}[\|\nabla _{x}g_k(x_{k},y_{k})-v_k\|^2]\nonumber\\
			&-\frac{a_2a_6}{2}(2+k)^{-5/13}\mathbb{E}[\|\nabla _{y}g_k(x_{k},y_{k})-w_k\|^2]\nonumber\\
			&-\frac{1}{4a_4}(2+k)^{-1/13}\mathbb{E}[\|\tilde{x}_{k+1}-x_k\|^2]-\frac{2La_4}{\beta}(2+k)^{-7/13}\mathbb{E}[\|\tilde{y}_{k+1} -y_k\|^2]\nonumber\\
			&+(\frac{40\alpha_kL^2(\rho_k-\rho_{k+1})}{\eta_{k}\beta^2\rho_{k+1}^3}-\frac{40\alpha_{k+1}L^2(\rho_{k+1}-\rho_{k+2})}{\eta_{k+1}\beta^2\rho_{k+1}^3})\sigma_y^2+\frac{\rho_k-\rho_{k+1}}{2}\sigma_y^2\nonumber\\
			&+\frac{2\delta^2(\gamma_k^2D_k^{(1)}+\theta_k^2D_k^{(2)})}{b}+\frac{12a_2L^2\sigma_y^2}{13b}(k+2)^{-1}.
		\end{align}
		It follows from the definition of $d_1$, \eqref{safoclem5:1} and \eqref{safocthm1:3} that $\forall k\ge 1$,
		\begin{align}\label{safocthm1:4}
			&d_1\eta_k\alpha_k\mathbb{E}[\|\nabla \tilde{\mathcal{G}}_k^{\alpha_k,\beta}(x_k,y_k)\|^2]\nonumber\\
			\le&S_{k}(x_{k},y_{k})-S_{k+1}(x_{k+1},y_{k+1})+\frac{\rho_k-\rho_{k+1}}{2}\sigma_y^2\nonumber\\
			&+(\frac{40\alpha_kL^2(\rho_k-\rho_{k+1})}{\eta_{k}\beta^2\rho_{k+1}^3}-\frac{40\alpha_{k+1}L^2(\rho_{k+1}-\rho_{k+2})}{\eta_{k+1}\beta^2\rho_{k+1}^3})\sigma_y^2\nonumber\\
			&+\frac{2\delta^2(\gamma_k^2D_k^{(1)}+\theta_k^2D_k^{(2)})}{b}+\frac{12a_2L^2\sigma_y^2}{13b}(k+2)^{-1}.
		\end{align}
		Denoting $\underbar{S}=\min\limits_{x\in\mathcal{X}}\min\limits_{y\in\mathcal{Y}}S_k(x,y)$, $\tilde{T}(\varepsilon)=\min\{k \mid \| \nabla \tilde{\mathcal{G}}_k^{\alpha_k,\beta}(x_k,y_{k}) \| \leq \frac{\varepsilon}{2}, k\geq 1\}$. By summing both sides of \eqref{safocthm1:4} from $k=1$ to $\tilde{T}(\varepsilon)$, we obtain
		\begin{align}
			&\sum_{k=1}^{\tilde{T}(\varepsilon)}d_1\eta_k\alpha_k\mathbb{E}[\|\nabla \tilde{\mathcal{G}}_k^{\alpha_k,\beta}(x_k,y_k)\|^2]\nonumber\\
			\le& S_{1}(x_{1},y_{1})-S_{\tilde{T}(\varepsilon)+1}(x_{\tilde{T}(\varepsilon)+1},y_{\tilde{T}(\varepsilon)+1})+\frac{\rho_1}{2}\sigma_y^2+\sum_{k=1}^{\tilde{T}(\varepsilon)}\frac{2\delta^2(\gamma_k^2D_k^{(1)}+\theta_k^2D_k^{(2)})}{b}\nonumber\\
			&+\frac{40\alpha_1L^2\rho_1\sigma_y^2}{\eta_{1}\beta^2\rho_2^3}+\sum_{k=1}^{\tilde{T}(\varepsilon)}\frac{12a_2d_yL^2\sigma_y^2}{13b}(k+2)^{-1}\nonumber\\
			\le& S_{1}(x_{1},y_{1})-\underbar{S}+\frac{\rho_1}{2}\sigma_y^2+\frac{40\alpha_1L^2\rho_1\sigma_y^2}{\eta_{1}\beta^2\rho_2^3}+\sum_{k=1}^{\tilde{T}(\varepsilon)}\frac{2\delta^2(a_5^2a_1+a_6^2a_2)}{b}(k+2)^{-1}\nonumber\\
			&+\sum_{k=1}^{\tilde{T}(\varepsilon)}\frac{12a_2L^2\sigma_y^2}{13b}(k+2)^{-1}.\label{safocthm1:5}
		\end{align}
		Since $\sum_{k=1}^{\tilde{T}(\varepsilon)}(k+2)^{-1}\le\ln(\tilde{T}(\varepsilon)+2)$ and $\sum_{k=1}^{\tilde{T}(\varepsilon)}(k+2)^{-9/13}\ge\frac{13}{4}(\tilde{T}(\varepsilon)+3)^{4/13}-\frac{13\cdot3^{4/13}}{4}$, by the definition of $C_1$ and $C_2$, we get
		\begin{align}
			\frac{\varepsilon^2}{4}\le\frac{C_1+C_2\ln(\tilde{T}(\varepsilon)+2)}{d_1a_4(\frac{13}{4}(\tilde{T}(\varepsilon)+3)^{4/13}-\frac{13\cdot3^{4/13}}{4})}.\label{safocthm1:7}
		\end{align}
		On the other hand, if $k\ge(\frac{2L\sigma_y}{\varepsilon})^{\frac{13}{2}}-1$, then $\rho_k\le\frac{\varepsilon}{2\sigma_y}$. This inequality together with the definition of $\sigma_y$ then imply that $\rho_k\|y_k\|\le\frac{\varepsilon}{2}$. Therefore, there exists a
			\begin{align*}
				T( \varepsilon)\le \max\{\tilde{T}(\varepsilon),(\frac{2L\sigma_y}{\varepsilon})^{\frac{13}{2}}-1\}.
			\end{align*}
			such that $\mathbb{E}[\|\nabla \mathcal{G}^{\alpha_k,\beta}(x_k,y_k)]\|\le\mathbb{E}[\|\nabla \tilde{\mathcal{G}}_k^{\alpha_k,\beta}(x_k,y_k)\|]+\rho_{k}\|y_{k}\|\le\varepsilon$ which completes the proof. 
		\end{proof}

		\begin{re}
			It is easily verified from \eqref{thm2.1:1} that $\mathcal{O} (\varepsilon ^2) =\mathcal{O}\left(\frac{\ln\tilde{T}(\varepsilon)}{\tilde{T}(\varepsilon)^{4/13}}\right)=\tilde{\mathcal{O}}(\tilde{T}(\varepsilon)^{-4/13})$, which means $\tilde{T}(\varepsilon)=\tilde{\mathcal{O}}\left(\varepsilon ^{-6.5} \right)$. Therefore, $T(\varepsilon)=\tilde{\mathcal{O}}\left(\varepsilon ^{-6.5} \right)$ by Theorem \ref{safocthm1}, which means that the number of iterations for Algorithm \ref{sfoalg:1} to obtain an $\varepsilon$-stationary point of problem \eqref{prob-s} is upper bounded by $\tilde{\mathcal{O}}\left(\varepsilon ^{-6.5} \right)$ for solving stochastic nonconvex-concave minimax problems.
		\end{re}

		\subsection{Nonsmooth Case.}
		Consider the following stochastic nonsmooth nonconvex-concave minimax optimization problem:
		\begin{equation}\label{prob-sn}
			\min \limits_{x\in \mathcal{X}}\max \limits_{y\in \mathcal{Y}}g(x,y)+f(x)-h(y),
		\end{equation}
		where $g(x,y)=\mathbb{E}_{\zeta\sim D}[G(x,y,\zeta)]$, both $f(x)$ and $h(y)$ are convex continuous but possibly nonsmooth function. 
		
		Denote the proximity operator as 
		$\operatorname{Prox}_{h,\mathcal{Z}}^{1/\alpha}(\upsilon):=\arg\min\limits_{z\in \mathcal{Z}} h(z)+\frac{\alpha}{2}\|z-\upsilon \|^2$. Based on FORMDA Algorithm, we propse an accelerated first-order regularized  momentum descent ascent algorithm for nonsmooth minimax problem (FORMDA-NS) by replacing $\mathcal{P}_{\mathcal{X}}^{1/\alpha_k}$ with $\operatorname{Prox}_{f,\mathcal{X}}^{1/\alpha_k}$, $\mathcal{P}_{\mathcal{Y}}^{1/\beta}$ with $\operatorname{Prox}_{h,\mathcal{Y}}^{1/\beta}$ in FORMDA Algorithm. The proposed FORMDA-NS algorithm is formally stated in Algorithm \ref{sfoalg:2}.  

		\begin{algorithm}[t]
			\caption{An Accelerated First-order Regularized  Momentum Descent Ascent Algorithm for Nonsmooth Minimax Problem (FORMDA-NS)}
			\label{sfoalg:2}
			\begin{algorithmic}
				\State{\textbf{Step 1}:Input $x_1,y_1,\lambda_1,\alpha_1,\beta,0<\eta_1\leq 1, b$; $\gamma_0=1$,$\theta_0=1$. Set $k=1$. }
				\State{\textbf{Step 2}: Draw a mini-batch samples $I_{k}=\{\zeta_i^{k}\}_{i=1}^b$. Compute 
					\begin{align}
						v_{k}&=\nabla_x\tilde{G}_{k}( x_{k},y_{k};I_{k})+(1-\gamma_{k-1})[v_{k-1}-\nabla _x\tilde{G}_{k-1}(x_{k-1},y_{k-1};I_{k})]\label{vk-ns}
					\end{align}			
					and 
					\begin{align}
						w_{k}&=\nabla_y\tilde{G}_{k}( x_{k},y_{k};I_{k})+(1-\theta_{k-1})[w_{k-1}-\nabla_y\tilde{G}_{k-1}( x_{k-1},y_{k-1};I_{k})]\label{wk-ns}
				\end{align}}
			\State{\textbf{Step 3}:Perform the following update for $x_k$ and $y_k$:  	
				\qquad \begin{align}
					\tilde{x}_{k+1}&=\operatorname{Prox}_{f,\mathcal{X}}^{1/\alpha_k} \left( x_k - \alpha_kv_k\right),\label{sfo2:update-tx}\\ x_{k+1}&=x_k+\eta_k(\tilde{x}_{k+1}-x_k),\label{sfo2:update-x}\\
					\tilde{y}_{k+1}&=\operatorname{Prox}_{h,\mathcal{Y}}^{1/\beta} \left( y_k + \beta w_k\right),\label{sfo2:update-ty}\\ y_{k+1}&=y_k+\eta_k(\tilde{y}_{k+1}-y_k).\label{sfo2:update-y}
			\end{align}}
			\State{\textbf{Step 4}:If some stationary condition is satisfied, stop; otherwise, set $k=k+1, $ go to Step 2.}
		\end{algorithmic}
	\end{algorithm}
	
	Before analyzing the convergence of Algorithm \ref{sfoalg:2}, we define the stationarity gap as the termination criterion as follows.
	\begin{defi}\label{azogap-fn}
		For some given $\alpha_k>0$ and $\beta>0$, the stationarity gap for problem \eqref{prob-sn} is defined as 
		\begin{equation*}
			\nabla \mathcal{G}^{\alpha_k,\beta}( x,y) :=\left( \begin{array}{c}
				\frac{1}{\alpha_k}\left( x-\operatorname{Prox}_{f,\mathcal{X}}^{\frac{1}{\alpha_k}}\left( x-\alpha_k\nabla _xg( x,y) \right) \right)\\
				\frac{1}{\beta} \left( y-\operatorname{Prox}_{h,\mathcal{Y}}^{\frac{1}{\beta} }\left( y+ \beta\nabla _yg( x,y) \right) \right)
			\end{array} \right) .
		\end{equation*}
	\end{defi}
	
	Denote
	\begin{align}
		\Phi_k(x)&:=\max_{y\in\mathcal{Y}}(g_k(x,y)-h(y)),\label{snphik}\\
		y_k^*(x)&:=\arg\max_{y\in\mathcal{Y}}(g_k(x,y)-h(y)).\label{snyk*}
	\end{align}
	By Proposition B.25 in \cite{Bertsekas}, we have $\nabla_x\Phi_k(x)=\nabla_x g_k(x, y_k^*(x))$ and $\Phi_k(x)$ is $L_{\Phi_k}$-Lipschitz smooth with $L_{\Phi_k}=L+\frac{L^2}{\rho_k}$ under the $\rho_k$-strong concavity of $g_k(x,y)$. Similar to the proof of Section 2.1, we first estimate an upper bound of $\|y_{k+1}^*(\bar{x})-y_{k}^*(x)\|^2$.

	\begin{lem}\label{snlem1}
		Suppose that Assumptions \ref{azoass:Lip} and \ref{sazocrho} hold. Then for any $x, \bar{x}\in\mathcal{X}$,
		\begin{align}\label{snlem1:1}
			\|y_{k+1}^*(\bar{x})-y_{k}^*(x)\|^2
			\le\frac{L^2}{\rho_{k+1}^2}\|\bar{x}-x\|^2+\frac{\rho_k-\rho_{k+1}}{\rho_{k+1}}(\|y_{k+1}^*(\bar{x})\|^2-\|y_{k}^*(x)\|^2).
		\end{align}
	\end{lem}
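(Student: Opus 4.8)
The plan is to mirror the proof of Lemma \ref{sazoclem1}; the only new feature is that $y_k^*(x)$ and $y_{k+1}^*(\bar{x})$ now maximize $g_k(x,\cdot)-h$ (resp.\ $g_{k+1}(\bar{x},\cdot)-h$) over $\mathcal{Y}$, so their first-order conditions are variational inequalities involving a subgradient of the convex nonsmooth term $h$. Note first that $g_k(x,\cdot)-h$ is still $\rho_k$-strongly concave, since $g_k$ is $\rho_k$-strongly concave and $-h$ is concave. Accordingly, I would record the optimality conditions: there exist $s_{k+1}\in\partial h(y_{k+1}^*(\bar{x}))$ and $s_k\in\partial h(y_k^*(x))$ such that for all $y\in\mathcal{Y}$,
\begin{align*}
	\langle\nabla_yg_{k+1}(\bar{x},y_{k+1}^*(\bar{x}))-s_{k+1},\, y-y_{k+1}^*(\bar{x})\rangle&\le0,\\
	\langle\nabla_yg_k(x,y_{k}^*(x))-s_k,\, y-y_{k}^*(x)\rangle&\le0.
\end{align*}
Setting $y=y_{k}^*(x)$ in the first and $y=y_{k+1}^*(\bar{x})$ in the second and adding them, the subgradient contribution becomes $-\langle s_{k+1}-s_k,\,y_k^*(x)-y_{k+1}^*(\bar{x})\rangle$. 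By monotonicity of the subdifferential of the convex function $h$, $\langle s_{k+1}-s_k,\,y_{k+1}^*(\bar{x})-y_k^*(x)\rangle\ge0$, so this contribution is nonnegative and may be discarded, leaving exactly the inequality
$$\langle\nabla_yg_{k+1}(\bar{x},y_{k+1}^*(\bar{x}))-\nabla_yg_k(x,y_{k}^*(x)),\,y_k^*(x)-y_{k+1}^*(\bar{x})\rangle\le0$$
that drives the smooth case.

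From here the argument is verbatim that of Lemma \ref{sazoclem1}: insert $\pm\nabla_yg_k(x,y_{k+1}^*(\bar{x}))$ and use $\rho_k$-strong concavity of $g_k(x,\cdot)$ to recover the analogue of \eqref{sazoclem1:4}; then invoke the definition $g_k(x,y)=g(x,y)-\frac{\rho_k}{2}\|y\|^2$ to expose the $(\rho_k-\rho_{k+1})$ term, apply Cauchy--Schwarz together with Assumption \ref{azoass:Lip} to the cross gradient difference to obtain the analogue of \eqref{sazoclem1:5}, and finally use the identity $\langle y_{k+1}^*(\bar{x}),\,y_k^*(x)-y_{k+1}^*(\bar{x})\rangle=\frac{1}{2}(\|y_k^*(x)\|^2-\|y_{k+1}^*(\bar{x})\|^2-\|y_{k+1}^*(\bar{x})-y_k^*(x)\|^2)$ with Assumption \ref{sazocrho} to reach \eqref{snlem1:1}.

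The only delicate point is the treatment of the nonsmooth subgradient terms: I must select $s_k\in\partial h(y_k^*(x))$ and $s_{k+1}\in\partial h(y_{k+1}^*(\bar{x}))$ certifying the two variational inequalities, and then rely on monotonicity $\langle s_{k+1}-s_k,\,y_{k+1}^*(\bar{x})-y_k^*(x)\rangle\ge0$ so that the nonsmooth contribution carries the favorable sign and drops out. Once this is secured, no new estimate is needed, since $h$ enters none of the subsequent gradient-Lipschitz or inner-product manipulations, which involve only $g$ and the quadratic regularizer $\frac{\rho_k}{2}\|y\|^2$.
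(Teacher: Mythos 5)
Your proposal is correct and follows essentially the same route as the paper: the paper's proof likewise writes the two variational inequalities with the subgradient terms of $h$, invokes monotonicity of the subdifferential via $\langle\partial h(y_{k+1}^*(\bar{x}))-\partial h(y_{k}^*(x)),\, y_{k+1}^*(\bar{x})-y_{k}^*(x)\rangle\ge0$ to discard the nonsmooth contribution, and then appeals to the argument of Lemma \ref{sazoclem1} for the rest. If anything, your write-up is slightly more careful than the paper's, which abuses notation by treating $\partial h(\cdot)$ as if it were a single vector, whereas you explicitly select elements $s_k\in\partial h(y_k^*(x))$ and $s_{k+1}\in\partial h(y_{k+1}^*(\bar{x}))$ certifying the optimality conditions before applying monotonicity.
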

	\begin{proof}
		The optimality condition for $y_k^*(x)$ implies that $\forall y\in \mathcal{Y}$ and $\forall k\geq 1$,
		\begin{align}
			\langle\nabla_yg_{k+1}(\bar{x},y_{k+1}^*(\bar{x}))-\partial h(y_{k+1}^*(\bar{x})), y_{k}^*(x)-y_{k+1}^*(\bar{x})\rangle&\le0,\label{snlem1:2}\\
			\langle\nabla_yg_k(x,y_{k}^*(x))-\partial h(y_{k}^*(x)), y_{k+1}^*(\bar{x})-y_{k}^*(x)\rangle&\le0.\label{snlem1:3}
		\end{align}
		By $\langle\partial h(y_{k+1}^*(\bar{x}))-\partial h(y_{k}^*(x)), y_{k+1}^*(\bar{x})-y_{k}^*(x)\rangle\ge0$, and similar to the proof of Lemma 2.1, we complete the proof.
	\end{proof}
	
	Next, we prove the iteration complexity of Algorithm \ref{sfoalg:2}.
	\begin{lem}\label{snlem2}
		Suppose that Assumptions \ref{azoass:Lip} and \ref{sazocrho} hold. Let $\{(x_k,y_k)\}$ be a sequence generated by Algorithm \ref{sfoalg:2}, if $\rho_k\le L$ and $0<\eta_k\le1$, then $\forall k \ge 1$, 
		\begin{align}
			&\Phi_{k+1}(x_{k+1})+f(x_{k+1})-\Phi_k(x_{k})-f(x_k)\nonumber\\
			\le&2\eta_k\alpha_k L^2\|y_k-y_k^*(x_k)\|^2+2\eta_k\alpha_k\|\nabla _xg_k(x_{k},y_{k})-v_k\|^2\nonumber\\
			&-(\frac{3\eta_k}{4\alpha_k}-\frac{L^2\eta_k^2}{\rho_{k+1}})\|\tilde{x}_{k+1}-x_k\|^2+\frac{\rho_k-\rho_{k+1}}{2}\sigma_y^2,\label{snlem2:1}
		\end{align}
		where $\sigma_y=\max\{\|y\| \mid y\in\mathcal{Y}\}$.
	\end{lem}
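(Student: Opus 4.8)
The plan is to mirror the proof of Lemma \ref{safoclem2}, the only novelties being the treatment of the nonsmooth term $f$ via its convexity and the replacement of the projection optimality by the proximal optimality condition. First I would split the increment as
\[
\Phi_{k+1}(x_{k+1})+f(x_{k+1})-\Phi_k(x_k)-f(x_k)=\big[\Phi_k(x_{k+1})+f(x_{k+1})-\Phi_k(x_k)-f(x_k)\big]+\big[\Phi_{k+1}(x_{k+1})-\Phi_k(x_{k+1})\big].
\]
The second bracket is handled exactly as in \eqref{safoclem2:7}: since $g_{k+1}-g_k=\tfrac{\rho_k-\rho_{k+1}}{2}\|\cdot\|^2$ and the $-h(y)$ term is common to both maxima defining $\Phi_{k+1}$ and $\Phi_k$, evaluating $\Phi_{k+1}(x_{k+1})$ at its maximizer $y_{k+1}^*(x_{k+1})$ and lower-bounding $\Phi_k(x_{k+1})$ at the same point yields $\Phi_{k+1}(x_{k+1})-\Phi_k(x_{k+1})\le\tfrac{\rho_k-\rho_{k+1}}{2}\|y_{k+1}^*(x_{k+1})\|^2\le\tfrac{\rho_k-\rho_{k+1}}{2}\sigma_y^2$.

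For the first bracket I would use that $\Phi_k$ is $L_{\Phi_k}$-smooth with $\nabla_x\Phi_k(x_k)=\nabla_x g_k(x_k,y_k^*(x_k))$ (guaranteed here by Proposition B.25 in \cite{Bertsekas}), and, since $\rho_k\le L$, bound $\tfrac{L_{\Phi_k}}{2}\le\tfrac{L^2}{\rho_k}$ exactly as in \eqref{safoclem2:2}. Writing $x_{k+1}-x_k=\eta_k(\tilde{x}_{k+1}-x_k)$ and inserting $\pm\nabla_x g_k(x_k,y_k)$ and $\pm v_k$, the inner product $\langle\nabla_x\Phi_k(x_k),x_{k+1}-x_k\rangle$ decomposes into the same three terms appearing in \eqref{safoclem2:2}. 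The first and third are bounded precisely as in \eqref{safoclem2:3} and \eqref{safoclem2:4}, producing $2\eta_k\alpha_k L^2\|y_k-y_k^*(x_k)\|^2$, $2\eta_k\alpha_k\|\nabla_x g_k(x_k,y_k)-v_k\|^2$, and two copies of $\tfrac{\eta_k}{8\alpha_k}\|\tilde{x}_{k+1}-x_k\|^2$.

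The crucial new step replaces the projection optimality \eqref{safoclem2:5}. Because $\tilde{x}_{k+1}=\operatorname{Prox}_{f,\mathcal{X}}^{1/\alpha_k}(x_k-\alpha_k v_k)$ minimizes the $\tfrac{1}{\alpha_k}$-strongly convex function $f(x)+\tfrac{1}{2\alpha_k}\|x-(x_k-\alpha_k v_k)\|^2$ over $\mathcal{X}$, testing minimality at $x=x_k$ gives
\[
f(\tilde{x}_{k+1})-f(x_k)+\langle v_k,\tilde{x}_{k+1}-x_k\rangle\le-\tfrac{1}{\alpha_k}\|\tilde{x}_{k+1}-x_k\|^2.
\]
Meanwhile, convexity of $f$ together with $x_{k+1}=(1-\eta_k)x_k+\eta_k\tilde{x}_{k+1}$ (using $0<\eta_k\le1$) yields $f(x_{k+1})-f(x_k)\le\eta_k(f(\tilde{x}_{k+1})-f(x_k))$. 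Adding the remaining inner-product contribution $\eta_k\langle v_k,\tilde{x}_{k+1}-x_k\rangle$ from the $\Phi_k$-descent to $f(x_{k+1})-f(x_k)$, the $\langle v_k,\cdot\rangle$ terms cancel and leave exactly $-\tfrac{\eta_k}{\alpha_k}\|\tilde{x}_{k+1}-x_k\|^2$, reproducing the effect of \eqref{safoclem2:5} in the smooth case. Collecting the $\|\tilde{x}_{k+1}-x_k\|^2$ coefficients, namely $\tfrac{\eta_k}{8\alpha_k}+\tfrac{\eta_k}{8\alpha_k}-\tfrac{\eta_k}{\alpha_k}+\tfrac{L^2\eta_k^2}{\rho_{k+1}}$ (using $\rho_{k+1}\le\rho_k$ on the smoothness term), gives $-(\tfrac{3\eta_k}{4\alpha_k}-\tfrac{L^2\eta_k^2}{\rho_{k+1}})$, and combining with the bound for the second bracket completes the proof. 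I expect the only delicate point to be this cancellation: one must invoke the full strong convexity of the proximal subproblem rather than mere first-order subgradient optimality, so that the $\langle v_k,\cdot\rangle$ terms exactly annihilate and the $f$-increment is absorbed with no leftover remainder.
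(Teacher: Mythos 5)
Your proof is correct and follows essentially the same route as the paper's: mirror Lemma \ref{safoclem2}, bound $\Phi_{k+1}(x_{k+1})-\Phi_k(x_{k+1})$ by $\frac{\rho_k-\rho_{k+1}}{2}\sigma_y^2$, absorb the $f$-increment via convexity along the momentum step $x_{k+1}=(1-\eta_k)x_k+\eta_k\tilde{x}_{k+1}$, and replace the projection optimality by the proximal one. The only cosmetic difference is in deriving the key inequality $f(\tilde{x}_{k+1})-f(x_k)+\langle v_k,\tilde{x}_{k+1}-x_k\rangle\le-\frac{1}{\alpha_k}\|\tilde{x}_{k+1}-x_k\|^2$: you use the strong-convexity growth bound of the prox subproblem, while the paper uses the first-order optimality condition $\langle v_k,\tilde{x}_{k+1}-x_k\rangle\le-\frac{1}{\alpha_k}\|\tilde{x}_{k+1}-x_k\|^2-\langle\xi_{k+1},\tilde{x}_{k+1}-x_k\rangle$ with $\xi_{k+1}\in\partial f(\tilde{x}_{k+1})$ combined with the subgradient inequality for $f$ — these are equivalent, so your closing caveat that ``mere first-order subgradient optimality'' would not suffice is the one inaccurate remark, though it does not affect the validity of your argument.
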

	\begin{proof}
		The optimality condition for $x_k$ in \eqref{sfo2:update-tx} implies that $\forall x\in \mathcal{X}$ and $\forall k\geq 1$,
		\begin{equation}\label{snlem2:2}
			\langle v_k,\tilde{x}_{k+1}-x_k \rangle \le -\frac{1}{\alpha_k}\|\tilde{x}_{k+1}-x_k\|^2-\langle \xi_{k+1},\tilde{x}_{k+1}-x_k \rangle, 
		\end{equation}
		where $\xi_{k+1}\in\partial f(\tilde{x}_{k+1})$.
		Similar to the proof of Lemma \ref{safoclem2}, by replacing \eqref{safoclem2:5} in Lemma \ref{safoclem2} with \eqref{snlem2:2}, we have 
		\begin{align}
			\Phi_{k+1}(x_{k+1})-\Phi_k(x_{k})
			\le&2\eta_k\alpha_k L^2\|y_k-y_k^*(x_k)\|^2+2\eta_k\alpha_k\|\nabla _xg_k(x_{k},y_{k})-v_k\|^2\nonumber\\
			&-(\frac{3\eta_k}{4\alpha_k}-\frac{L^2\eta_k^2}{\rho_{k+1}})\|\tilde{x}_{k+1}-x_k\|^2+\frac{\rho_k-\rho_{k+1}}{2}\sigma_y^2\nonumber\\
			&-\eta_k\langle \xi_{k+1},\tilde{x}_{k+1}-x_k \rangle .\label{snlem2:3}
		\end{align}
		By the definition of subgradient, we have $f(x_{k})-f( \tilde{x}_{k+1})\ge\langle\xi_{k+1},x_{k}-\tilde{x}_{k+1} \rangle$, then we have 
		\begin{align}\label{snlem2:4}
			-\eta_k\langle \xi_{k+1},\tilde{x}_{k+1}-x_k \rangle \le\eta_k(f(x_{k})-f( \tilde{x}_{k+1})).
		\end{align}      
		Utilizing the convexity of $f(x)$, $x_{k+1}=x_k+\eta_k(\tilde{x}_{k+1}-x_k)$ and $0<\eta_k\le1$, we get
		\begin{align}\label{snlem2:5}
			f(x_{k+1})\le(1-\eta_k)f(x_k)+\eta_kf(\tilde{x}_{k+1}).
		\end{align}     
		The proof is completed by combining \eqref{snlem2:3}, \eqref{snlem2:4} and \eqref{snlem2:5}.
	\end{proof}

	\begin{lem}\label{snlem3}
		Suppose that Assumptions \ref{azoass:Lip} and \ref{sazocrho} hold. Let $\{(x_k,y_k)\}$ be a sequence generated by Algorithm \ref{sfoalg:2}, if $0<\eta_k\le1$, $0<\beta\le\frac{1}{6L}$ and $\rho_k\le L$, then $\forall k \ge 1$, 
		\begin{align}
			&\|y_{k+1}-y_{k+1}^*(x_{k+1})\|^2\nonumber\\
			\le&(1-\frac{\eta_k\beta\rho_{k+1}}{4})\|y_{k}-y_k^*(x_k)\|^2-\frac{3\eta_k}{4}\|\tilde{y}_{k+1} -y_k\|^2+\frac{5L^2\eta_k}{\rho_{k+1}^3\beta}\|\tilde{x}_{k+1}-x_k\|^2\nonumber\\
			&+\frac{5\eta_k\beta}{\rho_{k+1}}\|\nabla_{y}g_k(x_{k},y_{k})-w_k\|^2+\frac{5(\rho_k-\rho_{k+1})}{\rho_{k+1}^2\eta_k\beta}(\|y_{k+1}^*(x_{k+1})\|^2-\|y_{k}^*(x_k)\|^2).\label{snlem3:1}
		\end{align}
	\end{lem}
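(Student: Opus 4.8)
The plan is to follow the proof of Lemma \ref{safoclem3} almost verbatim, isolating the single place where the nonsmooth term $h$ enters and verifying that it ultimately cancels. First I would invoke the $\rho_k$-strong concavity of $g_k(x_k,\cdot)$ to write the analog of \eqref{safoclem3:2}, namely $g_k(x_k,y)-g_k(x_k,y_k)\le\langle\nabla_yg_k(x_k,y_k),y-y_k\rangle-\frac{\rho_k}{2}\|y-y_k\|^2$, and split the inner product through $\tilde{y}_{k+1}$ exactly as in the smooth case. The descent-type smoothness estimate \eqref{safoclem3:3} is unchanged, since $g_k$ is still $L$-smooth in $y$ under Assumption \ref{azoass:Lip}.

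The one genuine modification is the optimality condition for the $y$-update. Because $\tilde{y}_{k+1}=\operatorname{Prox}_{h,\mathcal{Y}}^{1/\beta}(y_k+\beta w_k)$ in \eqref{sfo2:update-ty}, its optimality condition produces a subgradient $\xi_{k+1}\in\partial h(\tilde{y}_{k+1})$ so that, for all $y\in\mathcal{Y}$, $\langle w_k,y-\tilde{y}_{k+1}\rangle\le\frac{1}{\beta}\langle\tilde{y}_{k+1}-y_k,y-\tilde{y}_{k+1}\rangle+\langle\xi_{k+1},y-\tilde{y}_{k+1}\rangle$. The subgradient inequality $\langle\xi_{k+1},y-\tilde{y}_{k+1}\rangle\le h(y)-h(\tilde{y}_{k+1})$ then bounds the last term, and this is the nonsmooth replacement for \eqref{safoclem3:4}.

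Substituting these three ingredients into the strong-concavity chain, the extra contribution $h(y)-h(\tilde{y}_{k+1})$ moves to the left-hand side and combines with $g_k$ to produce $\bigl(g_k(x_k,y)-h(y)\bigr)-\bigl(g_k(x_k,\tilde{y}_{k+1})-h(\tilde{y}_{k+1})\bigr)$. I would then set $y=y_k^*(x_k)$ and use that $y_k^*(x_k)$ maximizes $g_k(x_k,\cdot)-h(\cdot)$ by \eqref{snyk*}; hence this difference is nonnegative, playing exactly the role that $g_k(x_k,y_k^*(x_k))\ge g_k(x_k,\tilde{y}_{k+1})$ played in the smooth proof. After this cancellation the resulting inequality coincides with \eqref{safoclem3:5}, with no residual $h$-terms left over.

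From there the argument is word-for-word that of Lemma \ref{safoclem3}: the identity \eqref{safoclem3:7} rewrites $\langle\tilde{y}_{k+1}-y_k,y_k^*(x_k)-y_k\rangle$, Cauchy-Schwarz handles the gradient-error term as in \eqref{safoclem3:8}, and the hypotheses $0<\eta_k\le1$, $0<\beta\le\frac{1}{6L}$, $\rho_k\le L$ give the one-step contraction \eqref{safoclem3:10}; finally Lemma \ref{snlem1}, which has precisely the same form as Lemma \ref{sazoclem1}, together with the update \eqref{sfo2:update-x} yields the expansions \eqref{safoclem3:11}--\eqref{safoclem3:12} and the stated constants. The only step requiring real care is checking the clean cancellation of the $h$-terms, i.e. that the orientation of the subgradient inequality is compatible with the definition of $y_k^*$ as a maximizer of $g_k-h$; once that is confirmed, the remainder is entirely routine and identical to the smooth case.
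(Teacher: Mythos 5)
Your proposal is correct and follows essentially the same route as the paper: the paper's proof likewise derives the prox-update optimality condition with a subgradient $\varrho_{k+1}\in\partial h(\tilde{y}_{k+1})$, applies the subgradient inequality to obtain the $h(y)-h(\tilde{y}_{k+1})$ term (its \eqref{snlem3:2}), and then reruns the proof of Lemma \ref{safoclem3} with \eqref{safoclem3:4} replaced by this modified condition. The only difference is that you make explicit the cancellation the paper leaves implicit, namely that the $h$-terms fold into $(g_k-h)(y_k^*(x_k))-(g_k-h)(\tilde{y}_{k+1})\ge 0$ via the definition \eqref{snyk*} of $y_k^*$ as the maximizer of $g_k(x_k,\cdot)-h(\cdot)$, which is exactly the right justification.
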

	\begin{proof}
		The optimality condition for $y_k$ in \eqref{sfo2:update-ty} implies that $\forall y\in \mathcal{Y}$ and $\forall k\geq 1$,
		\begin{align}\label{snlem3:2}
			\langle w_k,y-\tilde{y}_{k+1} \rangle &\le \frac{1}{\beta}\langle\tilde{y}_{k+1}-y_k,y-\tilde{y}_{k+1}\rangle+\langle \varrho_{k+1},y-\tilde{y}_{k+1} \rangle, \nonumber\\
			&=-\frac{1}{\beta}\|\tilde{y}_{k+1}-y_k\|^2+\frac{1}{\beta}\langle\tilde{y}_{k+1}-y_k,y-y_k \rangle+\langle \varrho_{k+1},y-\tilde{y}_{k+1} \rangle\nonumber\\
			&\le-\frac{1}{\beta}\|\tilde{y}_{k+1}-y_k\|^2+\frac{1}{\beta}\langle\tilde{y}_{k+1}-y_k,y-y_k \rangle+h(y)-h(\tilde{y}_{k+1}),
		\end{align}
		where $\varrho_{k+1}\in\partial h(\tilde{y}_{k+1})$, the last inequality holds by $h(y)-h( \tilde{y}_{k+1})\ge\langle\varrho_{k+1},y-\tilde{y}_{k+1} \rangle$. Similar to the proof of Lemma \ref{safoclem3}, by replacing \eqref{safoclem3:4} in Lemma \ref{safoclem3} with \eqref{snlem3:2}, we complete the proof.
	\end{proof}

	\begin{thm}\label{thm1}
		Suppose that Assumptions \ref{azoass:Lip}, \ref{sazocrho} and \ref{afoass:var} hold. Let $\{\left(x_k,y_k\right)\}$ be a sequence generated by Algorithm \ref{sfoalg:2}. 
		For any given $k\geq 1$, let
		\begin{align*}
			\eta_k&=\frac{1}{(k+2)^{5/13}},\quad \alpha_k=\frac{a_4}{(k+2)^{4/13}},\quad \rho_{k}=\frac{L}{(k+1)^{2/13}},\\
			\gamma_k&=\frac{a_5}{(k+2)^{12/13}},\quad \theta_k=\frac{a_6}{(k+2)^{8/13}},
		\end{align*}
		with 
		\begin{align*}
			0&< a_1\le\min\{\frac{b}{32a_4L^2},\frac{ba_4}{2L\beta}\},\quad 0<a_2\le\min\{\frac{b}{32a_4L^2},\frac{ba_4}{2L\beta}\},\\
			0&<a_4\le\min\{\frac{1}{8L},\frac{\beta}{8\sqrt{5}}\},\quad a_5\ge\frac{4a_4}{a_1}+\frac{12}{13},\quad a_6\ge\frac{80a_4}{a_2}+\frac{12}{13}.
		\end{align*}
		If $0<\beta\le\frac{1}{6L}$, then for any given $\varepsilon>0$, $$	T( \varepsilon)\le \max\{\tilde{T}(\varepsilon),(\frac{2L\sigma_y}{\varepsilon})^{\frac{13}{2}}-1\},$$
		where $\tilde{T}(\varepsilon)$ satisfies that  
		\begin{equation}
			\frac{\varepsilon^2}{4}\le\frac{C_1+C_2\ln(\tilde{T}(\varepsilon)+2)}{d_1a_4(\frac{13}{4}(\tilde{T}(\varepsilon)+3)^{4/13}-\frac{13\cdot3^{4/13}}{4})},\label{snthm2.1:1}
		\end{equation}
		with
		\begin{align*}
			C_1&=S_{1}(x_{1},y_{1})-\underbar{S}+\frac{\rho_1}{2}\sigma_y^2+\frac{40\alpha_1L^2\rho_1}{\eta_{1}\beta^2\rho_2^3}\sigma_y^2,\quad d_1\le\min\{\frac{1}{8},L\beta,\frac{a_1a_5}{4a_4},\frac{a_2a_6}{4a_4}\},\\
			C_2&=\frac{2\delta^2(a_5^2a_1+a_6^2a_2)}{b}+\frac{12a_2L^2\sigma_y^2}{13b},\quad\underbar{S}=\min\limits_{x\in\mathcal{X}}\min\limits_{y\in\mathcal{Y}}S_k(x,y),\\
			\sigma_y&=\max\{\|y\| \mid y\in\mathcal{Y}\}.
		\end{align*}
	\end{thm}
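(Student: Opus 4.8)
The plan is to reproduce the proof of Theorem \ref{safocthm1} almost verbatim, since the composite nonsmooth structure of problem \eqref{prob-sn} has already been absorbed into the auxiliary lemmas for Algorithm \ref{sfoalg:2}. The crucial observation is that the three preparatory lemmas for the nonsmooth algorithm yield \emph{exactly} the same recursive inequalities as their smooth counterparts: Lemma \ref{snlem1} reproduces the bound on $\|y_{k+1}^*(\bar{x})-y_k^*(x)\|^2$ from Lemma \ref{sazoclem1}; Lemma \ref{snlem3} reproduces the estimate of $\|y_{k+1}-y_{k+1}^*(x_{k+1})\|^2$ from Lemma \ref{safoclem3}; and Lemma \ref{snlem2} gives the same descent estimate as Lemma \ref{safoclem2}, except for the composite quantity $\Phi_{k+1}(x_{k+1})+f(x_{k+1})$ rather than $\Phi_{k+1}(x_{k+1})$ alone. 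Moreover, the variance-reduction bound (Lemma \ref{safoclem4}) carries over unchanged, because the recursions \eqref{vk-ns}--\eqref{wk-ns} for $v_k,w_k$ are identical to \eqref{vk}--\eqref{wk} and its right-hand side depends only on $\|\tilde{x}_{k+1}-x_k\|^2+\|\tilde{y}_{k+1}-y_k\|^2$, which is agnostic to whether $\tilde{x}_{k+1},\tilde{y}_{k+1}$ are produced by a projection or a proximal step.

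First I would redefine the potential function by replacing $\mathbb{E}[\Phi_k(x_k)]$ with $\mathbb{E}[\Phi_k(x_k)+f(x_k)]$ in the definition of $F_k$, and adjust $S_k$ accordingly. With this single modification the analogue of Lemma \ref{safoclem45} holds, and it delivers precisely the recursion \eqref{safocthm1:2}. Next, I would observe that the parameter-dependent coefficient estimates in the proof of Theorem \ref{safocthm1} depend only on the schedules $\eta_k,\alpha_k,\rho_k,\gamma_k,\theta_k,D_k^{(1)},D_k^{(2)}$ and on the constants $a_1,\dots,a_6$, and not on the smoothness of $f$ or $h$. Hence the identical chain of bounds applies: $\tfrac{\alpha_k}{\rho_{k+1}}\le\tfrac{\alpha_{k-1}}{\rho_k}$, together with the negativity of the coefficients of $\mathbb{E}[\|\nabla_x g_k(x_k,y_k)-v_k\|^2]$, $\mathbb{E}[\|\nabla_y g_k(x_k,y_k)-w_k\|^2]$, $\mathbb{E}[\|\tilde{x}_{k+1}-x_k\|^2]$ and $\mathbb{E}[\|\tilde{y}_{k+1}-y_k\|^2]$ under the stated conditions on $a_1,a_2,a_4,a_5,a_6$. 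Plugging these into the recursion produces the nonsmooth analogue of \eqref{safocthm1:3}.

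Then I would establish the nonsmooth version of the stationarity-gap estimate, Lemma \ref{safoclem5}. The only change is that the projection $\mathcal{P}$ is replaced by the proximal operator $\operatorname{Prox}$ in Definition \ref{azogap-fn}; since $f$ and $h$ are convex, these proximal operators are nonexpansive, so the same triangle-inequality argument bounds $\mathbb{E}[\|\nabla\tilde{\mathcal{G}}_k^{\alpha_k,\beta}(x_k,y_k)\|^2]$ by the same right-hand side. Combining this with the recursion, summing from $k=1$ to $\tilde{T}(\varepsilon)$, telescoping the $S_k$ terms, and invoking $\sum_{k=1}^{\tilde{T}(\varepsilon)}(k+2)^{-1}\le\ln(\tilde{T}(\varepsilon)+2)$ together with $\sum_{k=1}^{\tilde{T}(\varepsilon)}(k+2)^{-9/13}\ge\tfrac{13}{4}(\tilde{T}(\varepsilon)+3)^{4/13}-\tfrac{13\cdot 3^{4/13}}{4}$, yields \eqref{snthm2.1:1}. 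Finally the correction term $\rho_k\|y_k\|$ is handled exactly as before: for $k\ge(\tfrac{2L\sigma_y}{\varepsilon})^{13/2}-1$ one has $\rho_k\le\tfrac{\varepsilon}{2\sigma_y}$, hence $\rho_k\|y_k\|\le\tfrac{\varepsilon}{2}$, which gives $T(\varepsilon)\le\max\{\tilde{T}(\varepsilon),(\tfrac{2L\sigma_y}{\varepsilon})^{13/2}-1\}$.

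The main obstacle --- indeed essentially the only nontrivial point --- is confirming that the potential-function machinery is insensitive to the nonsmooth terms. The delicate step is that in Lemma \ref{snlem2} the subgradient contribution $-\eta_k\langle\xi_{k+1},\tilde{x}_{k+1}-x_k\rangle$ is exactly dominated by the function-value decrease $\eta_k(f(x_k)-f(\tilde{x}_{k+1}))$ via the convexity of $f$ and the inequality $f(x_{k+1})\le(1-\eta_k)f(x_k)+\eta_k f(\tilde{x}_{k+1})$, so that no residual nonsmooth error survives into the recursion. I would need to verify the analogous cancellation on the $y$-side in Lemma \ref{snlem3}, where the subgradient term of $h$ is absorbed through $h(y)-h(\tilde{y}_{k+1})$ and the strong concavity of $g_k$, leaving the right-hand side of the $\|y_{k+1}-y_{k+1}^*(x_{k+1})\|^2$ estimate identical to the smooth case. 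Once these two cancellations are in place, the remainder of the argument is a routine transcription of the proof of Theorem \ref{safocthm1}, and the same $\tilde{\mathcal{O}}(\varepsilon^{-6.5})$ complexity follows.
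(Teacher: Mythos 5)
Your proposal is correct and follows essentially the same route as the paper, which proves Theorem \ref{thm1} by replacing $F_{k+1}(x_{k+1},y_{k+1})$ with $\tilde{F}_{k+1}(x_{k+1},y_{k+1}):=F_{k+1}(x_{k+1},y_{k+1})+\mathbb{E}[f(x_{k+1})]$ and the projections with the proximal operators, then transcribing Lemmas \ref{safoclem4}--\ref{safoclem5} and Theorem \ref{safocthm1}. Your additional observations (the subgradient cancellations in Lemmas \ref{snlem2}--\ref{snlem3}, nonexpansiveness of the prox operators, and the insensitivity of the variance-reduction bound to how $\tilde{x}_{k+1},\tilde{y}_{k+1}$ are produced) are exactly the points the paper's terse proof implicitly relies on.
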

	
	\begin{proof}
		Similar to the proof of Lemma \ref{safoclem4}, \ref{safoclem45}, \ref{safoclem5} and Theorem \ref{safocthm1}, by replacing $F_{k+1}(x_{k+1},y_{k+1})$ with $\tilde{F}_{k+1}(x_{k+1},y_{k+1}):=F_{k+1}(x_{k+1},y_{k+1})+\mathbb{E}[f(x_{k+1})]$, $\mathcal{P}_{\mathcal{X}}^{1/\alpha_k}$ with $\operatorname{Prox}_{f,\mathcal{X}}^{1/\alpha_k}$, $\mathcal{P}_{\mathcal{Y}}^{1/\beta}$ with $\operatorname{Prox}_{h,\mathcal{Y}}^{1/\beta}$, respectively, we complete the proof.
	\end{proof}
	
	Theorem \ref{thm1} shows that the iteration complexity of Algorithm \ref{sfoalg:2} to obtain an $\varepsilon$-stationary point is upper bounded by $\tilde{\mathcal{O}}(\varepsilon ^{-6.5})$ under the nonsmooth setting.

	\section{Numerical Results}\label{sec4}
	In this section, we compare the numerical performance of  the proposed FORMDA algorithm with some existing algorithms for solving two classes of problems. All the numerical tests are implemented in Python 3.9. The first test is run in a laptopwith 2.30GHz processor and the second one is carried out on a RTX 4090 GPU.
	
	Firstly, we consider the following Wasserstein GAN (WGAN) problem \cite{Arjovsky},
	\begin{align*}
		\min _{\varphi_1, \varphi_2} \max _{\phi_1, \phi_2} f\left(\varphi_1, \varphi_2, \phi_1, \phi_2\right) \triangleq \mathbb{E}_{\left(x^{real}, z\right) \sim \mathcal{D}}\left[D_\phi\left(x^{real}\right)-D_\phi\left(G_{\varphi_1, \varphi_2}(z)\right)\right],
	\end{align*}
	where $G_{\varphi_1, \varphi_2}(z)=\varphi_1+\varphi_2 z, D_\phi(x)=\phi_1 x+\phi_2 x^2, \phi=\left(\phi_1, \phi_2\right), x^{r e a l}$ is a normally distributed random variable with mean $\varphi_1^*=0$ and variance $\varphi_2^*=0.1$, and $z$ a normally distributed random variable with mean $\phi_1^*=0$ and variance $\phi_2^*=1$. The optimal solution is $(\varphi_1^*, \varphi_2^*, \phi_1^*,\phi_2^*)=(0,0.1,0,1)$.
	
	We compare the numerical performance of the proposed FORMDA algorithm with the SGDA algorithm~\cite{Lin2019} and the PG-SMD algorithm \cite{Rafique} for solving the WGAN problem.  The batch size is set to be $b=100$ for all three tested algorithms. The parameters in FORMDA are chosen to be $\eta_k=\frac{1}{(k+2)^{5/13}}$, $\alpha_k=\frac{0.5}{(k+2)^{4/13}}$, $\rho_{k}=\frac{1}{(k+1)^{2/13}}$, $\gamma_k=\frac{3}{(k+2)^{12/13}}$, $\theta_k=\frac{2}{(k+2)^{8/13}}$, $\beta=0.005$.
	All the parameters of the PG-SMD algorithm and the SGDA algorithm are chosen the same as that in \cite{Rafique} and  \cite{Lin2019} respectively.
	
	\begin{figure}[htb]
		\centering  
		\subfigure{
			\includegraphics[width=0.3\textwidth]{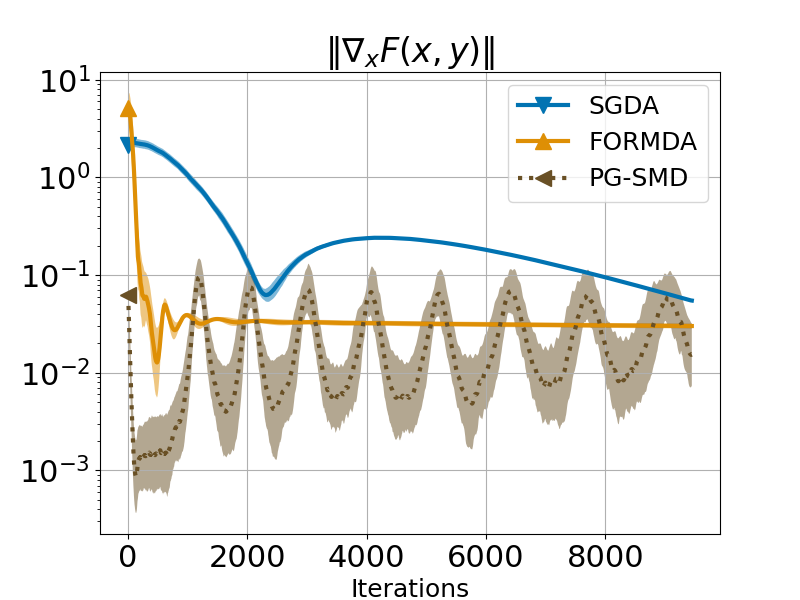}}
		\subfigure{
			\includegraphics[width=0.3\textwidth]{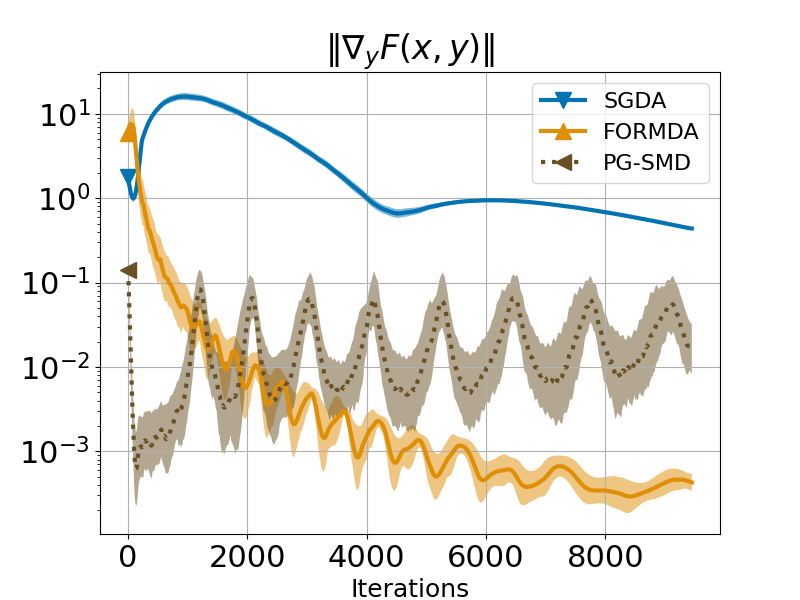}}
		\subfigure{
			\includegraphics[width=0.3\textwidth]{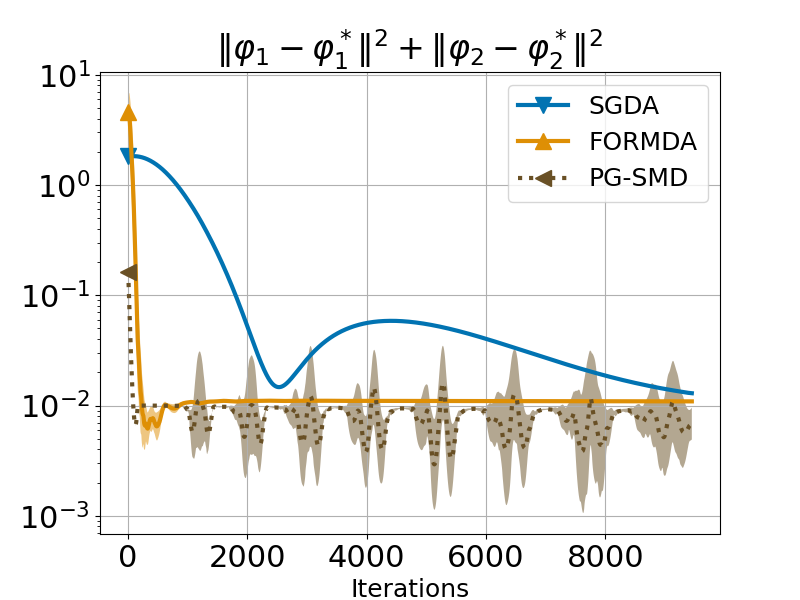}}
		\caption{ Performance of the PG-SMD algorithm, the SGDA algorithm and  the FORMDA algorithm for WGAN problem.}
		\label{Fig1}
	\end{figure}
	
	Figure \ref{Fig1} shows the average change in the stochastic gradient norm and the distance from the iteration point to the optimal value point for 5 independent runs of the four test algorithms, and the shaded area around the line indicates the standard deviation. It can be found that the proposed FORMDA algorithm is better than SGDA and close to the performance of PG-SMD.
	
	
	Secondly, we consider a robust learning problem over multiple domains \cite{Qian}.
	Let $P_1, \ldots, P_M$ denote $M$ distributions of data. Robust learning over multiple domains is to minimize the maximum of the expected losses over the $M$ distributions, it can be formulated as the following stochastic nonconvex-concave minimax problem:
	\begin{equation*}
		\min _{x \in \mathcal{X}} \max _m \mathbb{E}_{\zeta \sim P_m}[F(x;\zeta)]=\min _{x} \max _{y \in \Delta} \sum_{m=1}^{M}y_m f_m(x),
	\end{equation*}
	where $f_m(x)=\mathbb{E}_{\zeta \sim P_m}[F(x; \zeta)]$, $F(x;\zeta)$ is a nonnegative loss function,  $ x $ denotes the network parameters, $ y \in \Delta$ describes the weights over different tasks and $ \Delta $ is the simplex, i.e., $\Delta=\left\{(y_1,\cdots,y_M)^{T} \mid \sum_{m=1}^{M} y_{m}=1,  y_{m} \geq 0\right\}$.	
	We consider two image classification problems with MNIST and CIFAR10 datasets. Our goal is to train a neural network that works on these two completely unrelated problems simultaneously. The quality of the algorithm for solving a robust learning problem over multiple domains is measured by the worst case accuracy over all domains \cite{Qian}.
	
	We compare the numerical performance of the proposed FORMDA algorithm with SGDA algorithm~\cite{Lin2019} and PG-SMD algorithm \cite{Rafique}. We use cross-entropy loss as our loss function and set the batch size in all tasks to be 32. All algorithms were run for 50 epochs. The parameters in FORMDA are chosen to be $\eta_k=\frac{1}{(k+12)^{5/13}+1}$, $\alpha_k=\frac{0.5}{(k+12)^{2/13}+1}$, $\rho_{k}=\frac{8}{(k+11)^{2/13}+2}$, $\gamma_k=\frac{3}{(k+12)^{8/13}+1}$, $\theta_k=\frac{2}{(k+12)^{8/13}+1}$, $\beta=0.001$.
	All the parameters of the PG-SMD algorithm and the SGDA algorithm are chosen the same as that in \cite{Rafique} and  \cite{Lin2019} respectively.
	
	\begin{figure}[htb]
		\centering  
		\subfigure{
			\includegraphics[width=0.4\textwidth]{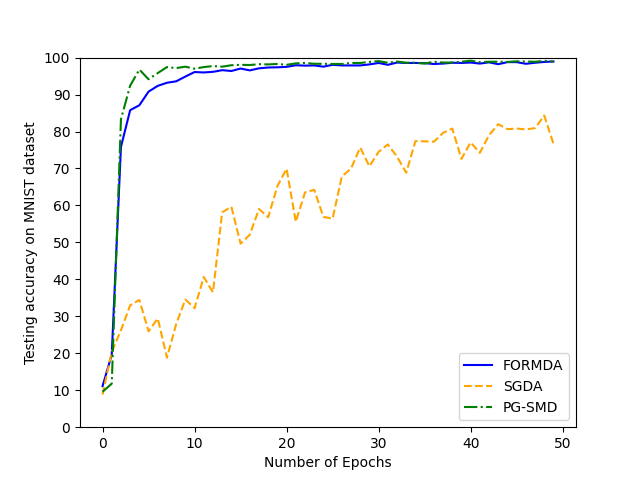}}
		\subfigure{
			\includegraphics[width=0.4\textwidth]{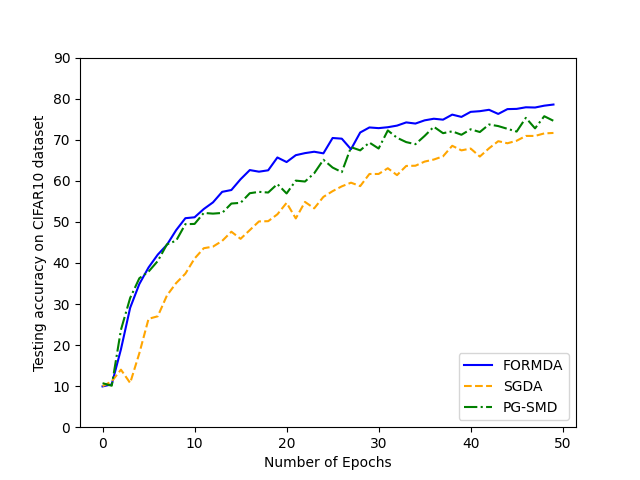}}
		\caption{Performance of three algorithms for solving robust multi-task learning problem.}
		\label{Fig2}
	\end{figure}
	
	Figure \ref{Fig2} shows the testing accuracy on MNIST and CIFAR10 datasets. It can be found that the proposed FORMDA algorithm outperforms SGDA and has similar performance to PG-SMD.

	\section{Conclusions}
	In this paper, we propose an accelerated FORMDA algorithm for solving stochastic nonconvex-concave minimax problems. The iteration complexity of the algorithm is proved to be $\tilde{\mathcal{O}}(\varepsilon ^{-6.5})$ to obtain an $\varepsilon$-stationary point. 
	It owns the optimal complexity bounds within single-loop algorithms for solving stochastic nonconvex-concave minimax problems till now. Moreover, we can extend the analysis to the scenarios with possibly nonsmooth objective functions, and obtain that the iteration complexity is $\tilde{\mathcal{O}}(\varepsilon ^{-6.5})$.
	Numerical experiments show the efficiency of the proposed algorithm. Whether there is a single-loop algorithm with better complexity for solving this type of problem is still worthy of further research.

	\bmhead{Acknowledgments}
	The authors are supported by National Natural Science Foundation of China under the Grant (Nos. 12071279 and 12471294).
	%
	%
	
	\section*{Declarations}
	%
	%
	\begin{itemize}
		\item {\bf Conflict of interest} The authors have no relevant financial or non-financial interests to disclose.
		\item {\bf Data availability} Data sharing not applicable to this article as no datasets were generated or analyzed during the current study.
	\end{itemize}
	%
	%
	%
	%
	%
	%

	

\begin{thebibliography}{9}
		\bibitem{Abadeh} Abadeh S S, Esfahani P M M, and  Kuhn D. Distributionally robust logistic regression. In NeurIPS, 2015: 1576-1584.
		
		\bibitem{Arjovsky} Arjovsky M, Chintala S, Bottou L. Wasserstein generative adversarial networks. International conference on machine learning. PMLR, 2017: 214-223.
		
		\bibitem{Bertsekas} Bertsekas D P. Nonlinear programming. Journal of the Operational Research Society, 1997, 48(3): 334-334.
		
		\bibitem{Boroun} Boroun M, Alizadeh Z, Jalilzadeh A. Accelerated primal-dual scheme for a class of stochastic nonconvex-concave saddle point problems. arXiv preprint arXiv:2303.00211, 2023.
		
		\bibitem{Bot} Bot R I, Böhm A. Alternating proximal-gradient steps for (stochastic) nonconvex-concave minimax problem. SIAM Journal on Optimization, 2023, 33(3): 1884-1913.
		\bibitem{Bohm} Böhm A. Solving nonconvex-nonconcave min-max problems exhibiting weak minty solutions. arXiv preprint arXiv:2201.12247, 2022.
		
		%
		
		\bibitem{Chen} Chen Y, Lan G, Ouyang Y. Accelerated schemes for a class of variational inequalities. Mathematical Programming,2017,165(1):113-149.
		
		
		
		\bibitem{cai2022accelerated} Cai Y, Oikonomou A, Zheng W. Accelerated algorithms for monotone inclusions and constrained nonconvex-nonconcave min-max optimization. arXiv preprint arXiv:2206.05248, 2022.
		
		\bibitem{Chen20} Chen J, Lau V K N. Convergence analysis of saddle point problems in time varying wireless systems—Control theoretical approach. IEEE Transactions on Signal Processing, 2011, 60(1): 443-452.
		%
		
		\bibitem{DiakonikolasEG} Diakonikolas J, Daskalakis C, Jordan M. Efficient methods for structured nonconvex-nonconcave min-max optimization. International Conference on Artificial Intelligence and Statistics. PMLR, 2021: 2746-2754.
		
		\bibitem{Doan} Doan T. Convergence rates of two-time-scale gradient descent-ascent dynamics for solving nonconvex min-max problems. Learning for Dynamics and Control Conference, PMLR, 2022:192-206.
		
		
		
		\bibitem{Grimmer} Grimmer B, Lu H, Worah P, Mirrokni V. The landscape of the proximal point method for nonconvex-nonconcave minimax optimization.Mathematical Programming, 2023, 201(1-2): 373-407.
		
		\bibitem{Giannakis} Giannakis G B,  Ling Q, Mateos G, Schizas I D, and Zhu H. Decentralized
		learning for wireless communications and networking. Splitting Methods in Communication, Imaging, Science, and Engineering, Springer, Cham, 2016:461-497.
		
		\bibitem{Giordano} Giordano R,  Broderick T andJordan M I. Covariances, robustness, and variational bayes. Journal of Machine Learning Research, 19(51), 2018.
		
		\bibitem{Huang} Huang F, Gao S, Pei J, Huang H. Accelerated zeroth-order and first-order momentum methods from mini to minimax optimization. Journal of Machine Learning Research, 2022, 23: 1-70.
		
		\bibitem{Huang23} Huang F. Enhanced Adaptive Gradient Algorithms for Nonconvex-PL Minimax Optimization. arXiv preprint arXiv:2303.03984, 2023.
		
		\bibitem{Hamedani} Hamedani E Y, Aybat N S. A primal-dual algorithm with line search for general convex-concave saddle point problems. SIAM Journal on Optimization, 2021, 31(2): 1299-1329.
		
		\bibitem{Hajizadeh} Hajizadeh S, Lu H, Grimmer B. On the linear convergence of extra-gradient methods for nonconvex-nonconcave minimax problems. INFORMS Journal on Optimization, 2023.
		
		\bibitem{Jiang} Jiang J, Chen X. Optimality conditions for nonsmooth nonconvex-nonconcave min-max problems and generative adversarial networks. arXiv preprint arXiv:2203.10914, 2022.
		
		\bibitem{Kong} Kong W, Monteiro R D C. An accelerated inexact proximal point method for solving nonconvex-concave min-max problems. SIAM Journal on Optimization, 2021, 31(4): 2558-2585.
		
		\bibitem{Kingma} Kingma D P, Ba J. Adam: A method for stochastic optimization. arXiv preprint arXiv:1412.6980, 2014.
		
		
		\bibitem{Lan2016} Lan G, Monteiro R D C. Iteration-complexity of first-order augmented lagrangian methods for convex programming. Mathematical Programming, 2016,155(1-2):511-547.
		
		\bibitem{lin2020near} Lin T, Jin C, Jordan M.
		Near-optimal algorithms for minimax optimization. Conference on Learning Theory, PMLR, 2020:2738-2779.
		
		\bibitem{lin18} Lin H, Mairal J, Harchaoui Z. Catalyst acceleration for first-order convex optimization: from theory to practice. Journal of Machine Learning Research, 2018, 18(212): 1-54.
		
		\bibitem{Liao} Liao W, Hong M, Farmanbar H, and  Luo Z Q. Semi-asynchronous routing for large scale hierarchical networks. In Proceedings of IEEE International Conference on Acoustics, Speech and Signal Processing (ICASSP), 2015:2894-2898.
		
		\bibitem{luo2020stochastic} Luo L, Ye H S, Huang Z C, Zhang T. Stochastic recursive gradient descent ascent for stochastic nonconvex-strongly-concave minimax problems. NeurIPS, 2020, 33: 20566-20577.
		
		\bibitem{Liu} Liu S, Lu S, Chen X, et al. Min-max optimization without gradients: Convergence and applications to adversarial ml. arXiv preprint arXiv:1909.13806, 2019.
		
		\bibitem{Lin2019} Lin T, Jin C, Jordan M.
		On gradient descent ascent for nonconvex-concave minimax problems. International Conference on Machine Learning, PMLR, 2020:6083-6093.
		
		\bibitem{Lu} Lu S, Tsaknakis I, Hong M, Chen Y. Hybrid block successive approximation for one-sided nonconvex min-max problems: algorithms and applications. IEEE Transactions on Signal Processing, 2020,68:3676-3691.
		%
		%
		\bibitem{Lee} Lee S, Kim D. Fast extra gradient methods for smooth structured nonconvex-nonconcave minimax problems. Advances in Neural Information Processing Systems, 2021, 34:22588-22600.
		
		
		\bibitem{Mokhtari} Mokhtari A, Ozdaglar A, Pattathil S. A unified analysis of extra-gradient and optimistic gradient methods for saddle point problems: Proximal point approach. International Conference on Artificial Intelligence and Statistics. PMLR, 2020: 1497-1507.
		
		\bibitem{Mateos} Mateos G, Bazerque J A, and  Giannakis G B. Distributed sparse linear regression. IEEE Transactions on Signal Processing, 2010, 58(10):5262-5276.
		
		\bibitem{nouiehed2019solving} Nouiehed M, Sanjabi M, Huang T, Lee J. D. Solving a class of non-convex min-max games using iterative first order methods. Advances in Neural Information Processing Systems, 2019: 14934-14942.
		
		\bibitem{Nemirovski} Nemirovski A. Prox-method with rate of convergence O (1/t) for variational inequalities with Lipschitz continuous monotone operators and smooth convex-concave saddle point problems. SIAM Journal on Optimization, 2004, 15(1): 229-251.
		
		\bibitem{Nesterov} Nesterov Y. Smooth minimization of non-smooth functions. Mathematical programming, 2005, 103: 127-152.
		
		\bibitem{Nesterov2007} Nesterov Y. Dual extrapolation and its applications to solving variational inequalities and related problems. Mathematical Programming, 2007, 109(2): 319-344.
		
		\bibitem{Ou15} Ouyang Y, Chen Y, Lan G, Pasiliao Jr E. An accelerated linearized alternating direction method of multipliers. SIAMJournal on Imaging Sciences, 2015, 8(1):644-681.
		
		\bibitem{Ou21} Ouyang Y, Xu Y. Lower complexity bounds of first-order methods for convex-concave bilinear saddle-point problems.Mathematical Programming, 2021, 185(1): 1-35.
		
		\bibitem{ostrovskii2020efficient} Ostrovskii D, Lowy A, Razaviyayn M. Efficient search of first-order nash equilibria in nonconvex-concave smooth min-max problems. SIAM Journal on Optimization, 2021, 31(4): 2508-2538.
		
		\bibitem{Pan} Pan W, Shen J, Xu Z. An efficient algorithm for nonconvex-linear minimax optimization problem and its application in solving weighted maximin dispersion problem. Computational Optimization and Applications, 2021,78(1): 287-306.
		
		\bibitem{Qian} Qian Q, Zhu S, Tang J, Jin R, Sun B, Li H. Robust optimization over multiple domains. Proceedings of the AAAI Conference on Artificial Intelligence, 2019,33(01):4739--4746.
		%
		%
		
		\bibitem{Rafique} Rafique H, Liu M, Lin Q, et al. Weakly-convex-concave min-max optimization: provable algorithms and applications in machine learning. Optimization Methods and Software, 2022, 37(3): 1087-1121.
		%
		%
		
		\bibitem{Sanjabi18} Sanjabi M, Razaviyayn M, Lee J D. Solving non-convex non-concave min-max games under polyak-{\L}ojasiewicz condition. arXiv preprint arXiv:1812.02878, 2018.
		
		\bibitem{SongOptDE} Song C, Zhou Z, Zhou Y, Jiang Y, Ma Y. Optimistic dual extrapolation for coherent non-monotone variational inequalities. Advances in Neural Information Processing Systems, 2020,33: 14303-14314.
		
		\bibitem{Shen} Shen J, Wang Z, Xu Z. Zeroth-order single-loop algorithms for nonconvex-linear minimax problems. Journal of Global Optimization, doi:10.1007/s10898-022-01169-5, 2022.
		%
		%
		
		\bibitem{Thek19} Thekumparampil K K, Jain P, Netrapalli P,Oh S. Efficient algorithms for smooth minimax optimization. Advances inNeural Information Processing Systems, 2019:12680-12691.
		
		\bibitem{Tom} Tominin V, Tominin Y, Borodich E, Kovalev D, Gasnikov A, Dvurechensky P. On accelerated methods for saddle-pointproblems with composite structure. COMPUTER,2023, 15(2): 433-467.
		
		\bibitem{Tieleman} Tieleman T, Hinton G, et al. Lecture 6.5-rmsprop: Divide the gradient by
		a running average of its recent magnitude. COURSERA: Neural networks for machine
		learning, 2012, 4(2):26-31.
		
		\bibitem{Wang} Wang Z, Balasubramanian K, Ma S, Razaviyayn M. Zeroth-order algorithms for nonconvex minimax problems with improved complexities. arXiv preprint arXiv:2001.07819, 2020.
		
		\bibitem{Xu} Xu Z, Zhang H, Xu Y, Lan G. A unified single-loop alternating gradient projection algorithm for nonconvex-concave and convex-nonconcave minimax problems. Mathematical Programming, 2023, 201:635-706.
		
		\bibitem{Xu20} Xu T, Wang Z, Liang Y, Poor H V. Gradient free minimax optimization: Variance reduction and faster convergence. arXiv preprint arXiv:2006.09361, 2020.
		
		\bibitem{xu21zeroth} Xu Z, Wang Z, Shen J, Dai Y. H. Derivative-free alternating projection algorithms for general nonconvex-concave minimax problems. SIAM Journal on Optimization, 34(2):1879-1908, (2024). 
		
		%
		\bibitem{xu22zeroth} Xu Z, Wang Z Q, Wang J L, Dai Y. H. Zeroth-order alternating gradient descent ascent algorithms for a class of nonconvex-nonconcave minimax problems. Journal of Machine Learning Research, 24(313):~1-25, (2023).
		
		\bibitem{Yang} Yang J, Kiyavash N, He N. Global convergence and variance-reduced optimization for a class of nonconvex-nonconcaveminimax problems.Advances in Neural Information Processing Systems, 2020, 33: 1153-1165.
		
		\bibitem{Yang22} Yang J, Orvieto A, Lucchi A, et al. Faster single-loop algorithms for minimax optimization without strong concavity. International Conference on Artificial Intelligence and Statistics. PMLR, 2022: 5485-5517.
		
		
		\bibitem{Zhang} Zhang J, Xiao P, Sun R, Luo Z. A single-loop smoothed gradient descent-ascent algorithm for nonconvex-concave min-max problems. Advances in Neural Information Processing Systems, 2020, 33:7377-7389.
		
		\bibitem{Zhang22} Zhang G, Wang Y, Lessard L, Grosse R B. Near-optimal local convergence of alternating gradient descent-ascent forminimax optimization. International Conference on Artificial Intelligence and Statistics, PMLR, 2022:7659-7679.
		
		\bibitem{Zhang2022} Zhang X, Aybat N S, Gurbuzbalaban M. Sapd+: An accelerated stochastic method for nonconvex-concave minimax problems. Advances in Neural Information Processing Systems, 2022, 35: 21668-21681.
	\end{thebibliography}
	
	
\end{document}